\documentclass[a4paper,12pt,reqno]{amsart}

\usepackage{t1enc}
\usepackage[english]{babel}
\usepackage{amsmath,amssymb,eucal,amsthm}
\usepackage{bm}
\usepackage{graphicx}
\usepackage{mathrsfs}
\usepackage{mathptmx}
\usepackage{latexsym}
\usepackage{ulem}

\setlength{\textwidth}{15cm}
\setlength{\textheight}{23cm}
\setlength{\oddsidemargin}{1cm}
\setlength{\evensidemargin}{0cm}
\setlength{\topmargin}{-1cm}

\newtheorem{theorem}{Theorem}
\newtheorem{lemma}[theorem]{Lemma}

\theoremstyle{definition}
\newtheorem*{remark}{Remark}

\def\pp{\mathbb{P}}
\def\nn{\mathbb{N}}
\def\zz{\mathbb{Z}}

\def\cc{\mathbb{C}}
\def\qq{\mathbb{Q}}
\def\ii{\mathbb{I}}

\def\gb{\mathfrak{B}}

\def\vf{\varphi}
\def\ve{\varepsilon}
\def\vph{{\varphi}_h}

\def\d{{\rm d}}
\def\meas{{\rm meas}}

\def\uH{{\underline H}}
\def\uZ{{\underline Z}}
\def\us{\underline{s}}
\def\uO{{\underline \Omega}}
\def\uA{{\underline \alpha}}
\def\uom{{\underline \omega}}
\def\ugb{{\underline \gb}}

\def\ho{{\widehat\omega}}
\def\hP{{\widehat P}}

\def\st{{\widetilde{S}}}
\def\tvf{{\widetilde{\varphi}}}

\def\no{{\mathbb{N}_0}}

\markboth{\today}{\today}

\begin{document}
\hfill\texttt{\jobname.tex}\qquad\today

\bigskip
\title[Discrete case of mixed joint universality]
{The discrete case of the mixed joint universality for a class of certain
partial zeta-functions}

\author{Roma Ka{\v c}inskait{\.e}}

\address{R. Ka{\v c}inskait{\.e} \\
Department of Mathematics and Statistics, Vytautas Magnus University, Vileikos 8, Kaunas LT-44404, Lithuania}
\email{roma.kacinskaite@vdu.lt}

\author{Kohji Matsumoto}

\address{K. Matsumoto, Graduate School of Mathematics, Nagoya University, Chikusa-ku,
Nagoya 464-8602, Japan}
\email{kohjimat@math.nagoya-u.ac.jp}
\date{}

\begin{abstract}
We give a new type of mixed discrete joint universality pro\-per\-ties, which is satisfied by a wide class of zeta-functions. We study the universality for a certain mo\-di\-fication of Matsumoto zeta-functions $\vph(s)$ and the periodic Hurwitz zeta-function $\zeta(s,\alpha;\gb)$ under the condition that the common difference of arithmetical progression $h>0$ is such that $\exp\{\frac{2 \pi}{h}\}$ is a rational number and parameter $\alpha$ is a transcendental number. Also we generalize a known discrete universality result in \cite{RK-KM-2017-Pal} and the new one as well.
\end{abstract}

\maketitle

{\small{Keywords: {approximation, discrete shift, Euler products, periodic Hurwitz zeta-function,
Matsumoto zeta-function, value distribution, universality.}}}

{\small{AMS classification:} 11M06, 11M41, 11M36.}

\section{Introduction}\label{sec-1}

In 2015, the first result on the mixed joint universality theorem for a general polynomial Euler product (or so-called Matsumoto zeta-function) $\varphi(s)$ belonging to the Steuding class $\widetilde S$
and a periodic Hurwitz zeta-function $\zeta(s,\alpha;\gb)$ was obtained by the authors (see \cite{RK-KM-2015}). In 2017, this result was generalized to the case of the tuple consisting of
one Matsumoto zeta-function and several periodic Hurwitz zeta-functions (see \cite{RK-KM-2017-BAMS}).

We recall the definitions of both of the above functions.   Let $s=\sigma+it$ be a complex variable, and by $\mathbb{P}$, $\mathbb{N}$, $\mathbb{N}_0$, $\mathbb{Z}$, $\mathbb{Q}$ and $\mathbb{C}$
denote the sets of all primes, positive integers, non-negative integers, integers, rational numbers and complex numbers, respectively. Let $\gb=\{b_m: m \in \no\}$ be a periodic
sequence of complex numbers $b_m$ with a minimal period $k \in \mathbb{N}$,
 and suppose that $\alpha$ is a fixed real number, $0<\alpha \leq 1$. Then, for $\sigma>1$, the periodic Hurwitz zeta-function is defined by the Dirichet series
$$
\zeta(s,\alpha;\gb)=\sum_{m=0}^{\infty}\frac{b_m}{(m+\alpha)^s}.
$$
For $\sigma>1$, the function $\zeta(s,\alpha;\gb)$ can be expressed as a linear combination of classical Hurwitz zeta-functions $\zeta(s,\alpha)$  (see \cite{AL-2006}), i.e.,
$$
\zeta(s,\alpha;\gb)=\frac{1}{k^s}\sum_{l=0}^{k-1}b_l\zeta\left(s,\frac{l+\alpha}{k}\right),
$$
from which we deduce that it can be analytically continued to the whole $s$-plane except for a possible simple pole at the point $s=1$ with residue
$
b:=k^{-1}(b_0+\cdots+b_{k-1}).
$

The polynomial Euler products $\tvf(s)$ or so-called Matsumoto zeta-functions are given  by the formula
\begin{equation}\label{rk-fo-1}
\tvf(s)=\prod_{m=1}^{\infty}\prod_{j=1}^{g(m)}\left(1-a_m^{(j)}p_m^{-sf(j,m)}\right)^{-1}
\end{equation}
for $m \in \mathbb{N}$, $g(m)\in \mathbb{N}$, $j \in \mathbb{N}$, $1 \leq j \leq g(m)$, $f(j,m)\in \mathbb{N}$, and $m$th prime number  $p_m$ (see \cite{KM-1990}). Suppose that, for non-negative constants
$\alpha$ and $\beta$, the inequalities
\begin{equation}\label{rk-fo-2}
g(m)\leq C_1 p_m^\alpha \quad \text{and} \quad |a_m^{(j)}|\leq p_m^\beta
\end{equation}
hold with a positive constant $C_1$. In view of this assumption, the right-side of the equality \eqref{rk-fo-1} converges absolutely for $\sigma>\alpha+\beta+1$, and in this half-plane the function $\tvf(s)$ can be presented by the Dirichlet series
$$
\tvf(s)=\sum_{k=1}^{\infty}\frac{{\widetilde c}_k}{k^s},
$$
where the coefficients ${\widetilde c}_k$ satisfy an estimate ${\widetilde c}_k=O(k^{\alpha+\beta+\varepsilon})$ with every postive $\varepsilon$ if all prime factors of $k$ are large
(for the comments, see Appendix in \cite{RK-KM-2017-BAMS}). For brevity, denote the shifted version of $\tvf(s)$ by
\begin{equation}\label{fo-3}
 \vf(s):=\tvf(s+\alpha+\beta)=\sum_{k=1}^{\infty}\frac{c_k}{k^s},
\end{equation}
where $c_k:=k^{-\alpha-\beta}{{\widetilde c}_k}$. Then $\vf(s)$ is an absolutely convergent series for $\sigma>1$. Also, let the function $\varphi(s)$ be such that:
\begin{itemize}
  \item[(i)] it can be continued meromorphically to
$\sigma\geq\sigma_0$, $\frac{1}{2}\leq\sigma_0<1$, and all poles in this
region are included in a compact set which has no intersection with the line $\sigma=\sigma_0$,
  \item[(ii)] for $\sigma\geq\sigma_0$,
  $
  \varphi(\sigma+it)=O(|t|^{C_2})
  $
   holds with a positive constant $C_2$,
  \item[(iii)] it holds the mean-value estimate
\begin{equation}\label{rk-eq-2-5}
\int_0^T|\varphi(\sigma_0+it)|^2 dt=O(T), \quad T \to \infty.
\end{equation}
\end{itemize}
We denote the set of all such $\varphi(s)$ by $\mathcal{M}$.

Now we recall the definition of the Steuding class $\st$ (see \cite{JSt-2007}). We say that the function $\vf(s)$ belongs to this class if following conditions are fulfilled:
\begin{itemize}
  \item[(a)] there exists a Dirichlet series expansion
  $$
  \varphi(s)=\sum_{m=1}^{\infty}\frac{a(m)}{m^s}
  $$
  with $a(m)=O(m^\varepsilon)$ for every $\varepsilon>0$;
  \item[(b)] there exists $\sigma_\varphi<1$ such that $\varphi(s)$ can be meromorphically continued to the half-plane $\sigma>\sigma_\varphi$;
  \item[(c)] for every fixed $\sigma>\sigma_\varphi$ and $\varepsilon>0$, there exists a constant $C_3 \geq 0$ such that
  $$
  \varphi(\sigma+it)=O(|t|^{C_3+\varepsilon});
  $$
  \item[(d)] there exists the Euler product expansion over primes, i.e.,
  $$
  \varphi(s)=\prod_{p \in \mathbb{P}}\prod_{j=1}^{l}\left(1-\frac{a_j(p)}{p^s}\right)^{-1};
  $$
  \item[(e)] there exists a constant $\kappa>0$ such that
  $$
  \lim_{x \to \infty}\frac{1}{\pi(x)}\sum_{p \leq x}|a(p)|^2=\kappa,
  $$
  where $\pi(x)$ denotes the number of primes $p$ not exceeding $x$.
\end{itemize}
Denote by $\sigma^*$ the infimum of all $\sigma_1$ such that
$$
\frac{1}{2T}\int_{-T}^{T}|\vf(\sigma+it)|^2 \d t \sim \sum_{m=1}^{\infty}\frac{|a(m)|^2}{m^{2\sigma}}
$$
holds for any $\sigma \geq \sigma_1$.
Then $\frac{1}{2}\leq \sigma^*<1$.
This implies that $\st \subset\mathcal{M}$.

Also, throughout this paper we will use the following notation and definitions. By $H(G)$ we denote the space of holomorphic functions on a region $G$
with the uniform convergence topology (here $G$ is any open region in the complex plane).
Let $K \subset \mathbb{C}$ be a compact set. Denote by $H^c(K)$
the set of all $\mathbb{C}$-valued continuous  functions on $K$ and holomorphic in the interior of $K$, and by $H_0^c(K)$ the subset of
elements of $H^c(K)$ which are non-zero on $K$, respectively. Let $D(a,b)=\{s \in \mathbb{C}: a <\sigma <b\}$ for every $a<b$, and denote
by $\meas\{A\}$ the Lebesgue measure of the measurable set $A \subset \mathbb{R}$. Denote by ${\mathcal B}(S)$ the set of all Borel subsets of a topological space $S$.

Now we recall the statement of our first result on the mixed joint universality property, which is of continuous character, for the functions $\vf(s)$ and $\zeta(s,\alpha;\gb)$.
This result is Theorem~2.2 in \cite{RK-KM-2015} (while a more general case is contained in Theorem~4.2 in \cite{RK-KM-2017-BAMS}).

\begin{theorem}[\cite{RK-KM-2015}]\label{rk-th-1}
Suppose that $\vf(s)$ belongs to the Steuding class $\st$, and $\alpha$ is a trans\-cendental number. Let $K_1$ be a compact subset of $D(\sigma^*,1)$, $K_{2}$ be a
compact subset of $D\big(\frac{1}{2},1\big)$, both with connected complements. Then, for any $f_1 \in H_0^c(K_1)$, $f_{2}\in H^c(K_{2})$ and every $\ve>0$, it holds that
\begin{eqnarray*}
\liminf\limits_{T \to \infty}\frac{1}{T}\meas \bigg\{\tau\in [0,T]: && \sup\limits_{s \in K_1}|\varphi(s+i\tau)-f_1(s)|<\varepsilon, \\
&& \sup\limits_{s\in K_{2}}|\zeta(s+i\tau,\alpha;\gb)-f_{2}(s)|<\varepsilon\bigg\}>0.
\end{eqnarray*}
\end{theorem}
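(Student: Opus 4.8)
The plan is to follow the probabilistic (Bagchi-type) method, reducing the assertion to the computation of the support of a limiting distribution on the space $H(D_1)\times H(D_2)$, where $D_1=D(\sigma^*,1)$ and $D_2=D(\tfrac12,1)$. First I would introduce the infinite-dimensional torus
$$
\Omega=\Omega_1\times\Omega_2,\qquad \Omega_1=\prod_{p\in\pp}\gamma_p,\qquad \Omega_2=\prod_{m\in\no}\gamma_m,
$$
where each $\gamma_p$ and $\gamma_m$ is a copy of the unit circle $\{z\in\cc:|z|=1\}$. Equipped with the product topology and the product Haar probability measure $m_H$, the space $\Omega$ is a compact abelian group, and I would consider the one-parameter group of translations $\tau\mapsto\big(\{p^{-i\tau}\}_p,\{(m+\alpha)^{-i\tau}\}_m\big)$. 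The first and crucial arithmetic input is that the set
$$
\{\log p:p\in\pp\}\cup\{\log(m+\alpha):m\in\no\}
$$
is linearly independent over $\qq$; this is precisely where the transcendence of $\alpha$ enters, and it guarantees that the translation flow is equidistributed with respect to $m_H$.

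Next I would prove a limit theorem: that the family of measures
$$
P_T(A)=\frac{1}{T}\meas\Big\{\tau\in[0,T]:\big(\vf(s+i\tau),\zeta(s+i\tau,\alpha;\gb)\big)\in A\Big\},\qquad A\in\mathcal{B}\big(H(D_1)\times H(D_2)\big),
$$
converges weakly as $T\to\infty$ to the distribution $P$ of the random element $\big(\vf(s,\omega_1),\zeta(s,\alpha,\omega_2;\gb)\big)$ with $\omega=(\omega_1,\omega_2)\in\Omega$, where $\vf(s,\omega_1)$ is the (shifted) Euler product twisted by $\omega_1(p)$ and $\zeta(s,\alpha,\omega_2;\gb)=\sum_{m\ge0}b_m\omega_2(m)(m+\alpha)^{-s}$. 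This step is by now standard: the equidistribution of the flow on $\Omega$ yields weak convergence of the torus-valued shifts to $m_H$, while conditions (a)--(e) of $\st$ together with the mean-value bound \eqref{rk-eq-2-5} ensure that the twisted Dirichlet series converge almost surely in the respective function spaces; the continuous mapping $\omega\mapsto\big(\vf(s,\omega_1),\zeta(s,\alpha,\omega_2;\gb)\big)$ then transfers the convergence to $H(D_1)\times H(D_2)$ and identifies $P$ as the push-forward of $m_H$.

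The heart of the argument --- and the step I expect to be the main obstacle --- is the identification of the support of $P$. I would show that
$$
\mathrm{supp}\,P=S_{\vf}\times H(D_2),\qquad S_{\vf}=\{g\in H(D_1): g(s)\neq0 \text{ for all } s\in D_1\}\cup\{0\}.
$$
The non-vanishing restriction on the first factor reflects the Euler product of $\vf$ and is extracted from properties (d)--(e) of $\st$ (in particular the density hypothesis involving $\kappa$); the second factor fills the whole of $H(D_2)$ because $\zeta(s,\alpha;\gb)$ carries no Euler product, the relevant denseness following from a rearrangement theorem for conditionally convergent series in Hilbert space applied to $\sum_m b_m\omega_2(m)(m+\alpha)^{-s}$. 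The delicate point is that the support is a genuine Cartesian product, i.e.\ that the two coordinates decouple; this again rests on the joint linear independence over $\qq$ recorded above, which prevents any arithmetic resonance between the primes $p$ and the shifts $m+\alpha$.

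Finally I would conclude by Mergelyan's approximation theorem. Since $f_1\in H_0^c(K_1)$ is non-vanishing on $K_1$, a continuous branch of $\log f_1$ exists and can be uniformly approximated on $K_1$ by a polynomial $q_1$, so $e^{q_1(s)}$ approximates $f_1$ and lies in $S_{\vf}$; likewise $f_2\in H^c(K_2)$ is uniformly approximated on $K_2$ by a polynomial $q_2\in H(D_2)$. Hence the pair $(f_1,f_2)$ belongs to the support of $P$ up to arbitrarily small error, so the open set
$$
G=\Big\{(g_1,g_2):\ \sup_{s\in K_1}|g_1(s)-f_1(s)|<\ve,\ \sup_{s\in K_2}|g_2(s)-f_2(s)|<\ve\Big\}
$$
satisfies $P(G)>0$. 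The weak convergence $P_T\Rightarrow P$ together with the portmanteau inequality $\liminf_{T\to\infty}P_T(G)\ge P(G)$ then yields the claimed positive lower density, completing the proof.
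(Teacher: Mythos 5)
Your proposal follows essentially the same route as the proof this theorem receives in \cite{RK-KM-2015}, which is also the scheme the present paper adapts for its discrete results: a Bagchi-type functional limit theorem on $H(D_1)\times H(D_2)$ obtained from equidistribution on the torus $\Omega_1\times\Omega_2$ (with the transcendence of $\alpha$ supplying the linear independence over $\qq$ of $\{\log p: p\in\pp\}\cup\{\log(m+\alpha): m\in\no\}$), identification of the support as $S_{\varphi}\times H(D_2)$ via Pecherskii-type rearrangement for the Euler-product-free factor, and the Mergelyan plus portmanteau conclusion. The argument is correct and matches the paper's approach, so there is nothing to add.
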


This theorem shows that the set of shifts $\tau$, with which the pair
$\big(\vf(s+i\tau),\zeta(s+i\tau,\alpha;\gb)\big)$ approximates the tuple of holomorphic functions $\big(f_1(s),f_2(s)\big)$,
is sufficiently rich and has a positive lower density.

The mixed joint universality property of discrete character is sometimes more in\-te\-res\-ting.
In this case, we study an approximation of the functions when the imaginary part of the complex variable $s$ varies only on
the values from a certain arithmetic progression with a common difference $h>0$.

In 2017, the discrete mixed joint universality for the pair $\big(\vf(s),\zeta(s,\alpha;\gb)\big)$ was proved by the authors (see \cite{RK-KM-2017-Pal}) under a condition for the set
$$
L({\mathbb{P}},\alpha,h):=\bigg\{\log p: p \in {\mathbb{P}}\bigg\} \cup \bigg\{\log(m+\alpha): m \in \nn_0 \bigg\}
\cup\bigg\{\frac{2 \pi}{h}\bigg\}.
$$

\begin{theorem}[\cite{RK-KM-2017-Pal}]\label{rk-th-2}
Let $\varphi(s)$, $K_1$, $K_2$, $f_1(s)$ and $f_2(s)$ be as in Theorem~\ref{rk-th-1}. Suppose that the elements of the set $L(\pp,\alpha,h)$ are linearly independent over $\qq$.
Then, for every $\varepsilon>0$,
\begin{eqnarray*}
\liminf\limits_{N \to \infty}
\frac{1}{N+1}
\#
\bigg\{0\leq k \leq N:
 && \sup\limits_{s \in K_1}|\varphi(s+ikh)-f_1(s)|<\varepsilon, \\ &&  \sup\limits_{s\in K_2}|\zeta(s+ikh,\alpha;\gb)-f_2(s)|<\varepsilon\bigg\}>0.
\end{eqnarray*}
\end{theorem}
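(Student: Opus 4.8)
The plan is to follow the probabilistic (Bagchi-type) route to universality, adapted to the discrete shifts $ikh$ and to the mixed pair $\bigl(\vf,\zeta(\cdot,\alpha;\gb)\bigr)$; it rests on a discrete joint limit theorem, a description of the support of the limit measure, and a final reduction through Mergelyan's theorem and the Portmanteau theorem. Since the continuous analogue is already available as Theorem~\ref{rk-th-1}, the genuinely new work is concentrated in the discrete limit theorem. I would fix the probabilistic setting as follows. Let $\gamma=\{s\in\cc:|s|=1\}$ and form the compact abelian groups $\Omega_1=\prod_{p\in\pp}\gamma_p$ and $\Omega_2=\prod_{m\in\no}\gamma_m$ with each factor a copy of $\gamma$; put $\Omega=\Omega_1\times\Omega_2$ with its probability Haar measure $m_H=m_{1H}\times m_{2H}$. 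Choosing strips $D_1=D(\sigma^*,1)\supset K_1$ and $D_2=D(\tfrac12,1)\supset K_2$, introduce the $\Omega$-parametrised functions
$$\vf(s,\omega_1)=\prod_{p\in\pp}\prod_{j=1}^{l}\Bigl(1-\frac{a_j(p)\omega_1(p)}{p^s}\Bigr)^{-1},\qquad \zeta(s,\alpha,\omega_2;\gb)=\sum_{m=0}^{\infty}\frac{b_m\,\omega_2(m)}{(m+\alpha)^s},$$
which converge for $m_H$-almost all $\omega=(\omega_1,\omega_2)$ on $D_1$, $D_2$ respectively, and let $P$ be the induced distribution of $\omega\mapsto\bigl(\vf(\cdot,\omega_1),\zeta(\cdot,\alpha,\omega_2;\gb)\bigr)$ on $H(D_1)\times H(D_2)$.

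The central step is the discrete limit theorem: the measures
$$P_N(A)=\frac{1}{N+1}\#\bigl\{0\le k\le N:\bigl(\vf(s+ikh),\zeta(s+ikh,\alpha;\gb)\bigr)\in A\bigr\}$$
converge weakly to $P$ as $N\to\infty$. Its engine is discrete equidistribution on $\Omega$: by the Weyl criterion the points $\bigl((p^{-ikh})_p,((m+\alpha)^{-ikh})_m\bigr)$, $0\le k\le N$, are uniformly distributed in $\Omega$ precisely when, for every finitely supported nontrivial integer tuple $((\ell_p),(\ell_m))$, the geometric sum $\frac1{N+1}\sum_{k=0}^{N}\exp\bigl(-ikh(\sum_p\ell_p\log p+\sum_m\ell_m\log(m+\alpha))\bigr)$ tends to $0$, i.e.\ when $h\bigl(\sum_p\ell_p\log p+\sum_m\ell_m\log(m+\alpha)\bigr)\notin 2\pi\zz$. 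This is exactly the $\qq$-linear independence of $L(\pp,\alpha,h)$, where the presence of the element $2\pi/h$ is indispensable. From equidistribution I would first obtain convergence for the finite Euler-product and finite Dirichlet-sum approximations, then remove the truncation using the mean-square and growth bounds furnished by the Steuding-class hypotheses (a)--(e) together with $a(m)=O(m^\ve)$, upgrading convergence of the finite-dimensional distributions to weak convergence in the function-space topology.

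Next I would identify the support of $P$. Transcendence of $\alpha$ makes $\{\log(m+\alpha):m\in\no\}$ linearly independent over $\qq$, so the $\Omega_2$-marginal has full support $H(D_2)$; for the $\Omega_1$-marginal the Euler product together with condition (e) ($\kappa>0$) and the mean-square estimate drive the Bagchi-type Hilbert-space argument, yielding support equal to $S_1:=\{g\in H(D_1): g\equiv 0 \text{ or } g(s)\neq 0\}$. The $\qq$-independence of the whole set $L(\pp,\alpha,h)$ decouples the two coordinate families, so $\operatorname{supp}P=S_1\times H(D_2)$. Given $f_1\in H_0^c(K_1)$ and $f_2\in H^c(K_2)$, I would apply Mergelyan's theorem to $\log f_1$ (well defined since $f_1$ is non-vanishing and $K_1$ has connected complement) and to $f_2$; this shows that the open set $G=\{(g_1,g_2):\sup_{K_1}|g_1-f_1|<\ve,\ \sup_{K_2}|g_2-f_2|<\ve\}$ meets $\operatorname{supp}P$, whence $P(G)>0$. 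The Portmanteau theorem then gives $\liminf_{N\to\infty}P_N(G)\ge P(G)>0$, which is the assertion.

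The main obstacle is the discrete limit theorem, and within it two points require care: verifying that the linear independence condition (through the $2\pi/h$ term) yields the equidistribution needed for the Weyl sums, and controlling the tails of the Euler product and of the periodic Hurwitz series uniformly enough to pass from the truncated to the full functions in the $H(D_1)\times H(D_2)$ topology. The support computation for the $\vf$-component is technically the heaviest, but it parallels the continuous case behind Theorem~\ref{rk-th-1}, so the truly new ingredient is confined to the discretized equidistribution and to checking that the product-torus (mixed) structure is preserved under the passage to arithmetic-progression shifts.
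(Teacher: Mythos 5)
Your proposal follows essentially the same route as the paper's proof of this theorem (given in \cite{RK-KM-2017-Pal} and replicated in Sections~3--5 here for its variants): the Fourier/Weyl equidistribution on the product torus $\Omega_1\times\Omega_2$ driven by the $\qq$-linear independence of $L(\pp,\alpha,h)$ (with $\frac{2\pi}{h}$ playing exactly the role you identify), passage from absolutely convergent approximating series to the full pair, the support identification $S_{\varphi}\times H(D_2)$, and the Mergelyan--Portmanteau finish. The one step you gloss over is the identification of the limit measure with the distribution of the random element $\big(\vf(s,\omega_1),\zeta(s,\alpha,\omega_2;\gb)\big)$, which the paper obtains via the Birkhoff--Khintchine theorem after proving ergodicity of the discrete shift $\omega\mapsto f_h\omega$ --- but that ergodicity rests on the same character non-vanishing $\chi(f_h)\neq 1$ that you already extract from the linear independence hypothesis, so no genuinely new idea is missing.
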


\section{Statements of results}\label{sec-1-1}

The main aim of this paper is to give a new type of mixed discrete joint universality theorem for the aforementioned functions under a different condition from that in Theo\-rem~\ref{rk-th-2}.
Suppose that $\exp\{\frac{2 \pi}{h}\}\in\mathbb{Q}$. Then we can write $\exp\{\frac{2 \pi}{h}\}=\frac{a}{b}$ for $(a,b)=1$, $a, b \in \zz$.   Write the factorization of numbers $a$ and $b$
into primes as $a=q_{1}^{\alpha_{1}}...q_{d(1)}^{\alpha_{d(1)}}$ and $b=r_1^{\beta_1}...r_{d(2)}^{\beta_{d(2)}}$, respectively. Put $\mathbb{P}_p=\{q_1,...,q_{d(1)},r_1,...,r_{d(2)}\}$, and let $\pp_h:=\pp \setminus \pp_p$. Denote
the set of all $m \in \nn$ such that $p_m$ (the $m$th prime) $\in \pp_p$ by $\nn_p$, and let
$\nn_h:=\nn \setminus \nn_p$.    Also denote the set of all $m \in \nn$ whose all prime divisors belong to $\pp_h$ by $\nn_m$.

Under the above notation, for $\sigma>\alpha+\beta+1$, we define a modification of the Matsumoto zeta-function ${\tvf}(s)$ by the formula
\begin{eqnarray}\label{rk-fo-4}
{\widetilde{\varphi}}_h(s)=\prod_{m\in\mathbb{N}_h}\prod_{j=1}^{g(m)}\left(1-a_m^{(j)}p_m^{-sf(j,m)}\right)^{-1},
\end{eqnarray}
and by $\vph(s)$ we denote its shifted version, i.e., $\vph(s):={\widetilde \varphi}_h(s+\alpha+\beta)$.
We call ${\widetilde{\varphi}}_h(s)$ and $\vph(s)$ the partial Matsumoto zeta-functions.
Note that the dif\-fe\-ren\-ce between $\varphi_h(s)$ and $\varphi(s)$ is only finitely many Euler factors. Therefore the function $\varphi_h(s)$ satisfies the properties (i), (ii) and (iii) too, so $\varphi_h\in\mathcal{M}$.
Moreover, if $\varphi\in\widetilde{S}$, then $\varphi_h\in\widetilde{S}$.

Now we are ready to give the statement of the main result in this paper.
We note that only the statement of this theorem was announced in \cite{RK-KM-2017-Pal}.

\begin{theorem}\label{rk-th-3}
Suppose that $\alpha$ is transcendental, $h>0$, and $\exp\big\{\frac{2\pi}{h}\big\}$ is a rational number. Let $\varphi(s)\in \st$ (and so $\varphi_h(s)\in\st$).
Suppose $K_1$, $K_2$, $f_1(s)$ and $f_2(s)$ satisfy the conditions of Theorem~\ref{rk-th-1}.
Then, for every $\varepsilon>0$, it holds that
\begin{align*}
\liminf\limits_{N \to \infty}
\frac{1}{N+1}
\#
\bigg\{0\leq k \leq N:
 & \sup\limits_{s \in K_1}|\varphi_h(s+ikh)-f_1(s)|<\varepsilon, \\ &  \sup\limits_{s\in K_2}|\zeta(s+ikh,\alpha;\gb)-f_2(s)|<\varepsilon\bigg\}>0.
\end{align*}
\end{theorem}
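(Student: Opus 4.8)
The plan is to follow the probabilistic method for universality theorems, reformulating the assertion as the weak convergence of a family of probability measures and then examining the support of the limit. On the product space $H(D_1)\times H(D_2)$ with $D_1=D(\sigma^*,1)$ and $D_2=D\bigl(\tfrac12,1\bigr)$, define for $N\in\nn$
$$
P_N(A)=\frac{1}{N+1}\#\bigl\{0\le k\le N:\bigl(\vph(s+ikh),\zeta(s+ikh,\alpha;\gb)\bigr)\in A\bigr\},\qquad A\in\mathcal{B}\bigl(H(D_1)\times H(D_2)\bigr).
$$
The aim is to show that $P_N$ converges weakly to an explicit measure $P$ and that the support of $P$ is rich enough to contain neighbourhoods of the prescribed pair $(f_1,f_2)$. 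The natural probability space is a torus: since $\vph(s)$ is assembled only from the Euler factors indexed by $\mathbb{N}_h$ (equivalently, primes in $\mathbb{P}_h$), I set $\Omega_1=\prod_{p\in\mathbb{P}_h}\gamma_p$ and $\Omega_2=\prod_{m\in\no}\gamma_m$ with each $\gamma$ the unit circle, and equip $\Omega=\Omega_1\times\Omega_2$ with the product Haar measure $m_H$. The discrete shift by $h$ corresponds to translation on $\Omega$ by the element with coordinates $p^{-ih}$ ($p\in\mathbb{P}_h$) and $(m+\alpha)^{-ih}$ ($m\in\no$).

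The decisive step, which replaces the linear independence hypothesis of Theorem~\ref{rk-th-2} by the assumption $\exp\{2\pi/h\}\in\qq$, is to prove that the orbit of this translation is uniformly distributed on $\Omega$. By Weyl's criterion this is equivalent to the linear independence over $\qq$ of the set $\{2\pi\}\cup\{h\log p:p\in\mathbb{P}_h\}\cup\{h\log(m+\alpha):m\in\no\}$. Writing $\exp\{2\pi/h\}=a/b$ with $(a,b)=1$ and factoring $a,b$ into primes shows that $2\pi/h=\log(a/b)$ is a rational combination of $\{\log q:q\in\mathbb{P}_p\}$ alone. Hence any nontrivial rational relation among the displayed numbers would, after dividing by $h$ and substituting for $2\pi/h$, yield a nontrivial rational relation inside $\{\log p:p\in\pp\}\cup\{\log(m+\alpha):m\in\no\}$. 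Because $\alpha$ is transcendental this last set is linearly independent over $\qq$, and the resulting contradiction forces all coefficients to vanish. Removing exactly the primes of $\mathbb{P}_p$ is precisely what eliminates the obstruction carried by $2\pi/h$, which is the structural reason for passing from $\vf$ to the modification $\vph$.

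With uniform distribution in hand, the discrete joint limit theorem proceeds along the standard route. The finite-dimensional distributions converge by equidistribution on finite-dimensional sub-tori, while tightness of $\{P_N\}$ follows from discrete second-moment bounds of the form $\sum_{k\le N}|\vph(\sigma+ikh)|^2=O(N)$ and the analogous bound for $\zeta(s,\alpha;\gb)$, both deduced from the continuous mean-value estimate \eqref{rk-eq-2-5} and property (ii) via a Gallagher-type device. Passing from finite Euler and Dirichlet truncations to the full functions uses absolute convergence in a right half-plane together with approximation in the mean, exactly as in \cite{RK-KM-2015,RK-KM-2017-Pal}. This identifies $P$ as the distribution of the random pair $\bigl(\vph(s,\omega_1),\zeta(s,\alpha;\gb,\omega_2)\bigr)$ with $\omega=(\omega_1,\omega_2)\in\Omega$.

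Finally I would determine the support of $P$. Since $\Omega_1$ and $\Omega_2$ are independent, $\vph\in\st$ carries an Euler product, and $\zeta(s,\alpha;\gb)$ does not, the support is the product $S_1\times H(D_2)$, where $S_1=\{g\in H(D_1):g\text{ is non-vanishing, or }g\equiv0\}$; the Hurwitz factor fills the whole space because the transcendence of $\alpha$ makes the relevant random series dense, and deleting only the finitely many primes of $\mathbb{P}_p$ does not alter $S_1$. Given $f_1\in H_0^c(K_1)$ and $f_2\in H^c(K_2)$, Mergelyan's theorem approximates $f_1$ by the exponential of a polynomial (hence non-vanishing) and $f_2$ by a polynomial on the respective compacta, placing the target inside $\mathrm{supp}\,P$. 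Consequently the open set $G=\{(g_1,g_2):\sup_{K_1}|g_1-f_1|<\varepsilon,\ \sup_{K_2}|g_2-f_2|<\varepsilon\}$ satisfies $P(G)>0$, and the portmanteau theorem gives $\liminf_{N}P_N(G)\ge P(G)>0$, which is the assertion. I expect the principal technical obstacle to be the tightness and limit-theorem step in the infinite-dimensional setting—namely securing the discrete mean-square bounds for $\vph$ and controlling the tails of its Dirichlet series—whereas the equidistribution argument, though it is the conceptual heart exploiting the rationality of $\exp\{2\pi/h\}$, is comparatively short.
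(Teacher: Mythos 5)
Your proposal is correct and follows essentially the same route as the paper: a discrete limit theorem on the torus $\Omega_{1h}\times\Omega_2$ indexed by $\pp_h$, where the rationality of $\exp\{2\pi/h\}$ together with the transcendence of $\alpha$ yields the nonvanishing character condition (the paper's \eqref{rk-fo-6}, which your linear-independence argument after substituting $2\pi/h=\log(a/b)$ reproduces exactly), then truncation and approximation in the mean, identification of the limit as the distribution of $\big(\varphi_h(s,\omega_{1h}),\zeta(s,\alpha,\omega_2;\gb)\big)$, the support theorem $S_{\varphi}\times H(D_T)$, Mergelyan's theorem, and the portmanteau lower bound. The one step you leave implicit---ergodicity of the shift $\Phi_h$ (the paper's Lemma~\ref{rk-le-5}), which the paper uses both in the approximation-in-mean step and to identify the limit measure---rests on precisely the same character computation you carry out for equidistribution, so the difference is presentational rather than substantive.
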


\begin{remark}
A typical case when the elements of $L({\mathbb{P}},\alpha,h)$ are linearly independent
is that $\alpha$ and $\exp\big\{\frac{2\pi}{h}\big\}$ are algebraically independent over $\qq$ (see
\cite{EB-AL-2015rj}).
On the other hand, in Theorem \ref{rk-th-3} we assume that $\exp\big\{\frac{2\pi}{h}\big\}$ is rational.
Therefore the arithmetic nature of $h$ in Theorem \ref{rk-th-3} is quite different from
that in Theorem \ref{rk-th-2}.
\end{remark}

The next two results are generalizations of Theorems~\ref{rk-th-2} and \ref{rk-th-3}, which are extended to the case of a collection of  periodic Hurwitz zeta-functions.
Define
$$
L({\mathbb{P}},\uA,h):=\bigg\{\log p: p \in {\mathbb{P}}\bigg\} \cup \bigg\{\log(m+\alpha_j): m \in \nn_0, \ j=1,...,r\bigg\} \cup\bigg\{\frac{ \pi}{h}\bigg\}.
$$	

\begin{theorem}\label{rk-th-2-1}
	Let $\varphi(s) \in \st$,  $K_1$ be a compact subset of $D(\sigma^*,1)$, $K_{2j}$  be compact subset of
	$D\big(\frac{1}{2},1\big)$, $j=1,...,r$, all of them with connected complements, and $f_1\in H_0^c(K_1)$, $f_{2j}\in H^c(K_{2j})$.
	Suppose that the elements of the set $L(\pp,\uA,h)$ are linearly independent over $\qq$. Then, for every $\ve>0$,
		\begin{eqnarray}\label{rk-fo-5-0}
			\liminf\limits_{N \to \infty}
			\frac{1}{N+1}
			\#
			\bigg\{0\leq k \leq N:
			&& \sup\limits_{s \in K_1}|\varphi(s+ikh)-f_1(s)|<\varepsilon, \cr &&  \sup\limits_{1\leq j \leq r}\sup\limits_{s\in K_{2j}}|\zeta(s+ikh,\alpha_j;\gb_j)-f_{2j}(s)|<\varepsilon\bigg\}>0.
		\end{eqnarray}
\end{theorem}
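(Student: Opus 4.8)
My plan is to follow the Bagchi-type probabilistic method used for Theorem~\ref{rk-th-2}, extending it to a tuple of periodic Hurwitz zeta-functions. Write $D_1=D(\sigma^*,1)$ and $D_{2j}=D(\tfrac12,1)$, and work in the space $H(D_1)\times\prod_{j=1}^{r}H(D_{2j})$. On the probability side, let $\gamma=\{z\in\cc:|z|=1\}$, form the tori $\Omega=\prod_{p\in\pp}\gamma$ and $\Omega_j=\prod_{m\in\no}\gamma$ for $1\le j\le r$, and endow $\uO:=\Omega\times\prod_{j=1}^{r}\Omega_j$ with the product Haar measure $m_H$. Writing a generic point as $\omega=(\ho,\omega_1,\dots,\omega_r)$, one forms the $H$-valued random element
$$
\big(\vf(s,\ho),\,\zeta(s,\alpha_1,\omega_1;\gb_1),\dots,\zeta(s,\alpha_r,\omega_r;\gb_r)\big),
$$
where $\vf(s,\ho)$ is the Euler product of $\vf$ twisted by $\ho$ and $\zeta(s,\alpha_j,\omega_j;\gb_j)=\sum_{m=0}^{\infty}b_{j,m}\omega_j(m)(m+\alpha_j)^{-s}$, with $\gb_j=\{b_{j,m}\}$. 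The first goal is the discrete limit theorem: the measures
$$
P_N(A)=\frac{1}{N+1}\#\big\{0\le k\le N:\big(\vf(s+ikh),\zeta(s+ikh,\alpha_1;\gb_1),\dots\big)\in A\big\}
$$
converge weakly to the distribution of this random element as $N\to\infty$.

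The linear independence of $L(\pp,\uA,h)$ over $\qq$ enters exactly here, through Weyl's criterion. The sequence $k\mapsto\big((p^{-ikh})_{p},((m+\alpha_j)^{-ikh})_{m,j}\big)$ is uniformly distributed in $\uO$ because, for any finite nontrivial choice of integers $\{l_p\},\{l_{m,j}\}$, the associated character sum equals $\frac{1}{N+1}\sum_{k=0}^{N}\exp(-ikh\Lambda)$ with $\Lambda=\sum_p l_p\log p+\sum_{m,j}l_{m,j}\log(m+\alpha_j)$, and the hypothesis forces $\Lambda\notin\frac{2\pi}{h}\zz$, so this geometric sum tends to $0$. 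From this equidistribution I would first establish the limit theorem for the absolutely convergent truncations of the Dirichlet series and products, and then pass to the full functions by the standard mean-square approximation, using property~\eqref{rk-eq-2-5} for $\vf$ together with the analogous second-moment bound for each $\zeta(s,\alpha_j;\gb_j)$ on lines in $D_{2j}$.

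It remains to determine the support of the limit measure and to conclude. Since $\vf\in\st$, the support of its component is $\{g\in H(D_1):g(s)\neq0\text{ for all }s\text{, or }g\equiv0\}$, the non-vanishing restriction being the reason $f_1$ is required to lie in $H_0^c(K_1)$; for each periodic Hurwitz component the support is the whole space $H(D_{2j})$. The key point, and what I expect to be the main obstacle, is showing that the support of the joint measure is the full product of these factors, i.e.\ that the Euler-product part and the $r$ Hurwitz parts may be approximated simultaneously and independently. This is handled by a density argument in the $L^2$-spaces on the tori: the non-vanishing structure and condition~(e) of $\st$ supply enough mass for the $\vf$-component, a Pecherskii-type rearrangement theorem gives full support for each Hurwitz component, and the linear independence of $L(\pp,\uA,h)$ guarantees that the corresponding systems of logarithms span independent directions, so that no linear relation can couple the components and collapse the support. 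Granting this, the conclusion is standard: using $f_1\in H_0^c(K_1)$ write $f_1=e^{g_1}$ with $g_1$ holomorphic and apply Mergelyan's theorem to approximate $g_1$ and each $f_{2j}$ by polynomials on $K_1$ and $K_{2j}$; the set of tuples lying within $\varepsilon$ of $(f_1,f_{21},\dots,f_{2r})$ is open and, by the support computation, has positive $m_H$-measure, whence the portmanteau theorem turns the weak convergence of $P_N$ into the positive lower density asserted in~\eqref{rk-fo-5-0}.
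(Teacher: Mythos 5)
Your proposal follows essentially the same route as the paper: a discrete joint limit theorem on the product torus $\uO_r$ established through the character sum $\frac{1}{N+1}\sum_{k=0}^{N}\exp\{-ikh\Lambda\}$, where the linear independence of $L(\pp,\uA,h)$ over $\qq$ forces $e^{-ih\Lambda}\neq 1$ for nontrivial characters (the paper's Lemma~\ref{rk-le-7}, phrased via the continuity theorem for measures on compact groups rather than Weyl's criterion, but the computation is identical), followed by truncation and mean approximation, identification of the support as $S_\varphi\times H(D_T)^r$, and Mergelyan plus the portmanteau theorem. One small correction: the joint-support step you single out as the main obstacle needs no decoupling argument and the hypothesis on $L(\pp,\uA,h)$ plays no role there --- the Haar measure $m_{Hr}$ is a product measure, so the $r+1$ components are independent random elements and the support of the joint distribution is automatically the product of the marginal supports (the paper simply invokes Lemma~5.2 of \cite{RK-KM-2017-BAMS}); the arithmetic hypothesis is consumed entirely in the limit theorem.
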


The second generalization deals with the partial Matsumoto zeta-function $\vph(s)$ and a collection of periodic Hurwitz zeta-functions.

\begin{theorem}\label{rk-th-3-1}
	Suppose that  $\varphi(s)$ (and so $\varphi_h(s)$) and  $\exp\{\frac{2 \pi}{h}\}$ are as in Theorem~\ref{rk-th-3}, and the numbers $\alpha_1,...,\alpha_r$ are algebraically independent over $\qq$.	Let $K_1$, $f_1(s)$,  $K_{2j}$ and $f_{2j}(s)$, $j=1,...,r$, be as in Theorem~\ref{rk-th-2-1}. Then the universality inequality of the form \eqref{rk-fo-5-0} holds when $\vf(s)$ is replaced by $\vph(s)$.
\end{theorem}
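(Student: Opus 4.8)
\textbf{Proof strategy for Theorem~\ref{rk-th-3-1}.}
The plan is to combine the two main innovations of the earlier theorems: the rationality-of-$\exp\{2\pi/h\}$ device used to handle the partial Matsumoto zeta-function $\vph(s)$ in Theorem~\ref{rk-th-3}, and the multi-Hurwitz joint-independence argument developed for Theorem~\ref{rk-th-2-1}. As in all universality proofs of this type, the heart of the matter is a \emph{joint limit theorem}: one shows that the $\zz$-parametrised family of shifts $\big(\vph(s+ikh),\,\zeta(s+ikh,\alpha_1;\gb_1),\dots,\zeta(s+ikh,\alpha_r;\gb_r)\big)$ induces, as $k$ ranges over $0,\dots,N$, a sequence of empirical measures on $H(D_1)\times\prod_{j=1}^{r}H(D_{2j})$ (with $D_1=D(\sigma^*,1)$ and $D_{2j}=D(\tfrac12,1)$) that converges weakly to an explicit limit measure $P$, and then one identifies the support of $P$ as the whole space so that any target tuple can be approximated.

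\textbf{Step 1: the probabilistic model and the limit measure.}
First I would set up the relevant probability space. Let $\Omega=\prod_{p\in\pp}\gamma_p$ be the infinite torus indexed by primes, and $\Omega_j=\prod_{m\in\no}\gamma_{j,m}$ one torus for each $j=1,\dots,r$, with normalised Haar measure $m_H$ on $\uO:=\Omega\times\prod_{j=1}^r\Omega_j$. The crucial point, inherited from Theorem~\ref{rk-th-3}, is that the discrete shift by $h$ combined with the rationality of $\exp\{2\pi/h\}$ means the primes split into the finite exceptional set $\pp_p$ and the generic set $\pp_h$; only the factors over $\nn_h$ survive in $\vph(s)$, and for those primes the quantities $\{(kh/2\pi)\log p:p\in\pp_h\}$, together with $\{(kh/2\pi)\log(m+\alpha_j)\}$ and $kh/(2\pi)$ itself, are the relevant data. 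I would prove a Weyl-type equidistribution statement showing that the sequence $\big((kh\log p/2\pi)_p,(kh\log(m+\alpha_j)/2\pi)_{m,j}\big)_{k}$ is uniformly distributed modulo one in $\uO$. This is exactly the place where the algebraic independence of $\alpha_1,\dots,\alpha_r$ over $\qq$ (replacing the full linear-independence hypothesis of Theorem~\ref{rk-th-2-1}) is used: one must check that the exponential sums $\sum_{k=0}^{N}\exp\{ikh(\sum_p l_p\log p+\sum_{m,j}l_{m,j}\log(m+\alpha_j))\}$ are $o(N)$ for every nonzero integer vector, and the rationality of $\exp\{2\pi/h\}$ is what tames the $\pp_p$-primes and isolates the genuinely irrational frequencies.

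\textbf{Step 2: from equidistribution to the limit theorem, then support.}
Granting equidistribution on $\uO$, I would transfer it through the continuous maps $\omega\mapsto\vph(s,\omega)$ and $\omega_j\mapsto\zeta(s,\alpha_j;\gb_j,\omega_j)$ to obtain weak convergence of the empirical measures to the pushforward $P$ of $m_H$; here the analytic hypotheses on $\vph$ (namely $\vph\in\st\subset\mathcal{M}$, with the mean-value bound \eqref{rk-eq-2-5}) supply the tightness and the approximation-in-mean of $\vph$ by its finite Euler products, exactly as the remark after \eqref{rk-fo-4} guarantees that $\vph$ enjoys the same properties (i)--(iii) as $\vf$. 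The remaining and genuinely hard step is the \emph{support computation}: I must show that the support of $P$ is $S\times\prod_{j=1}^r H(D_{2j})$, where $S=\{g\in H(D_1):g\ne0\text{ or }g\equiv0\}$, i.e. that the $\vph$-component can approximate any non-vanishing target while the $r$ Hurwitz components are jointly free. The mixed nature of the problem is precisely that $\vph$ has an Euler product (forcing the nonvanishing constraint $f_1\in H_0^c(K_1)$ and requiring a denseness argument via $\log$ of the Euler factors over $\pp_h$), whereas the periodic Hurwitz functions have no Euler product and their joint support is full, established by a Pečariĝ-type rearrangement/denseness argument that leans on the algebraic independence of the $\alpha_j$. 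I expect this joint support identification to be the main obstacle, since one must verify that removing the finitely many $\pp_p$-factors does not shrink the $\vph$-support and that the surviving primes $\pp_h$ still give a dense enough additive subgroup; the finiteness of $\pp_p$ together with $\vph\in\st$ is what rescues this.

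\textbf{Step 3: conclusion via Mergelyan and the support.}
Finally, applying Mergelyan's theorem I approximate $\log f_1$ and each $f_{2j}$ by functions holomorphic on neighbourhoods of $K_1$ and $K_{2j}$, so that $(f_1,f_{21},\dots,f_{2r})$ lies in the support of $P$; the weak convergence from Step~2 then forces the set of admissible $k\le N$ to have positive lower density, which is exactly the assertion \eqref{rk-fo-5-0} with $\vf$ replaced by $\vph$. The only structural difference from the proof of Theorem~\ref{rk-th-2-1} is that the equidistribution and support arguments run over $\pp_h$ and $\nn_h$ rather than all of $\pp$ and $\nn$, a modification that is harmless because $\pp_p$ is finite; the only structural difference from Theorem~\ref{rk-th-3} is the passage from one to $r$ periodic Hurwitz zeta-functions, for which algebraic independence of $\alpha_1,\dots,\alpha_r$ over $\qq$ is the correct hypothesis ensuring joint freeness of the Hurwitz components.
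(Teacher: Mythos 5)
Your proposal is correct and follows essentially the same route as the paper: a discrete limit theorem on the torus indexed by $\pp_h$ and the sequences $(m+\alpha_j)^{-ikh}$, proved by the Fourier/Weyl-sum method in which the rationality of $\exp\{\frac{2\pi}{h}\}$ together with unique factorization rules out relations among the $\pp_h$-primes while the algebraic independence of $\alpha_1,\dots,\alpha_r$ rules out relations involving the Hurwitz terms, followed by the standard passage (mollified absolutely convergent series, approximation in the mean, ergodicity of the shift) to the limit theorem on $H(D_M)\times H(D_T)^r$, the support identification $S_{\varphi}\times H(D_T)^r$, and Mergelyan's theorem. Only cosmetic slips separate you from the paper's argument: the underlying torus should be $\Omega_{1h}\times\Omega_2\times\cdots\times\Omega_2$ over $\pp_h$ from the outset rather than over all of $\pp$, and the approximation in the mean uses the weighted series $\varphi_{h,n}$ rather than finite Euler products.
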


\begin{remark}
The mixed joint universality property  for the  tuple of different types of zeta-functions (one having an Euler product expression and the other without them) was introduced  by H.~Mishou in 2007 (see \cite{HM-2007}), and independently by J.~Steuding and J.~Sanders in 2006 (see \cite{JSt-JS-2006}). They obtained that any two holomorphic functions can be approximated simul\-ta\-neous\-ly by shifts of the Riemann zeta-function $\zeta(s)$ and the Hurwitz zeta-function $\zeta(s,\alpha)$.

The concept of discrete universality was introduced by A.~Reich studying the Dedekind zeta-functions in 1980 (see \cite{AR-1980}).
Mixed discrete joint universality theorems for zeta-functions are interesting and complicated objects for the investigation, because an important role is played by arithmetic properties of parameters occurring in the theorems.
Until this moment only few papers related to this problem have appeared. Two papers by E.~Buivydas and A.~Laurin\v cikas studied the mixed discrete universality for the collection $(\zeta(s),\zeta(s,\alpha))$ (see \cite{EB-AL-2015rj}, \cite{EB-AL-2015lmj}).
The authors' result in \cite{RK-KM-2017-Pal}, which is stated as Theorem~\ref{rk-th-2}, gives a generalization of the result in \cite{EB-AL-2015rj}.
Note that the first attempt to prove mixed discrete joint universality theorem was made by the first author (see \cite{RK-2009}).    The proof in \cite{RK-2009} unfortunately contains incompleteness, but this work is the origin of our present investigation.
In fact, Theorem \ref{rk-th-3} is a generalization of the ``corrected'' version of \cite{RK-2009}, as was mentioned in \cite{RK-KM-2017-Pal}.
\end{remark}

The present paper is organized in the following way. The main result of the paper is mixed discrete joint universality theorem for the pair $(\vph(s), \zeta(s,\alpha;\gb))$ (Theorem~\ref{rk-th-3}), and we separate its proof into two parts:  in Section~\ref{sec-2} we prove a discrete functional limit theorem which contains the most part of novelty of this paper, and in Section~\ref{sec-3} we study the support of the limit measure and give a proof of Theorem~\ref{rk-th-3}. In the last two sections, we briefly outline the proof of  Theorems~\ref{rk-th-2-1} and \ref{rk-th-3-1}.

\section{Functional discrete limit theorem}\label{sec-2}

In this section, we assume $\varphi\in\mathcal{M}$ (and so $\varphi_h \in \mathcal{M}$), and we give a proof of a mixed joint discrete limit theorem in the sense of weakly convergent probability measures in the space of holomorphic functions for the pair $\big(\varphi_h(s),\zeta(s,\alpha;\gb)\big)$.

The function $\varphi(s)$ has only finitely many poles, say $s_1(\varphi),\ldots,s_l(\varphi)$, and let
$$
D_{\varphi}:=\{s \in \cc : \;\sigma>\sigma_0,\; \sigma\neq\Re s_j(\varphi), \;1\leq j\leq l\}.
$$
The poles of $\varphi(s)$ and of $\varphi_h(s)$ in the region $\sigma>\sigma_0$ exactly
coincide, and hence
the functions $\varphi_h(s)$ and $\varphi_h(s+ikh)$ are holomorphic in
$D_{\varphi}$.
Also, the functions $\zeta(s,\alpha;\gb)$ and $\zeta(s+ikh,\alpha;\gb)$ are holomorphic in
$$
D_{\zeta}:=
\begin{cases}
\big\{s \in \cc: \; \sigma>\frac{1}{2}\big\} & \text{if}\quad \zeta(s,\alpha;\gb)\;\; \text{is entire},\cr
\big\{s \in \cc :\;\sigma>\frac{1}{2},\; \sigma\neq 1\big\} & \text{if}\quad s=1\;\; \text{is a pole of}\;\; \zeta(s,\alpha;\gb)
\end{cases}
$$
(for the arguments, see \cite{RK-KM-2015}).

Suppose that $N>0$, $D_1$ is an open subset of $D_{\varphi}$ and $D_2$ is an open subset of
$D_{\zeta}$. For brevity, write $\uH=H(D_1)\times H(D_2)$.  On ${\uH}$, define
$$
P_{Nh}(A)=\frac{1}{N+1}\#\big\{0 \leq k \leq N: \;
\underline{Z}_h(\underline{s}+ikh)
\in A\big\}, \quad A\in\mathcal{B}(\uH),
$$
with $\underline{s}+ikh=(s_1+ikh,s_2+ikh)$, $s_1\in D_1$, $s_2\in D_2$, and
$$
\underline{Z}_h(\underline{s}):=
\big(\varphi_h(s_1),\zeta(s_2,\alpha;\gb)\big).
$$

Let $\gamma$ be the unit circle on the complex plane $\cc$, i.e., $\gamma:=\{s \in \cc: |s|=1\}$. Define three tori
$$
\Omega_1:=\prod_{p \in \pp} \gamma_p, \quad
\Omega_{1h}:=\prod_{p \in \pp_h} \gamma_{ph} \quad \text{and} \quad  \Omega_2:=\prod_{m=0}^{\infty}\gamma_m
$$
with $\gamma_p=\gamma$ for all  $p \in \pp$, $\gamma_{ph}=\gamma$ for all  $p \in \pp_h$ and $\gamma_m=\gamma$ for all $m\in \no$, respectively.   Further define
$
\Omega_h=\Omega_{1h}\times \Omega_2.
$
By the construction, the torus $\Omega_{1h}$ is a closed  subgroup of $\Omega_1$.  By the Tikhonov theorem, all the tori $\Omega_1$, $\Omega_{1h}$ and $\Omega_2$ are compact topological Abelian groups (see Lemma 5.1.5 from \cite{AL-1996}) and the torus $\Omega_h$ is also a compact topological Abelian group.
Then, on $(\Omega_h,{\mathcal B }(\Omega_h))$, there exists a probability Haar measure $m^h_H$. Here $m^h_H:=m_{H1h}\times m_{H2}$ with the Haar measures $m_{H1h}$ and $m_{H2}$ defined
on the spaces $(\Omega_{1h}, {\mathcal{B}}(\Omega_{1h}))$ and $(\Omega_2,{\mathcal B}(\Omega_2))$, respectively.   This leads to the probability space $(\Omega_h,{\mathcal B}(\Omega_h),m^h_H)$.
For the elements $\omega_1\in\Omega_1$, $\omega_{1h}\in \Omega_{1h}$ and $\omega_2 \in \Omega_2$, denote by $\omega_1(p)$, $\omega_{1h}(p)$ and $\omega_2(m)$ the projections to the coordinate spaces $\gamma_p$ for $p \in \pp$,
$\gamma_{ph}$ for  $p \in \pp_h$, and $\gamma_m$ for $m \in \no$, respectively.

For $s_1\in\cc$ and $\omega_{1h}\in\Omega_{1h}$, define
\begin{equation}\label{rk-fo-5-1}
\varphi_h(s_1,\omega_{1h})=\sum_{k\in \nn_m}\frac{c_k \omega_{1h}(k)}{k^{s_1}}=
\prod_{k \in \nn_h}\prod_{j=1}^{g(k)}\bigg(1-a_{k}^{(j)}\omega_{1h}(p_k)^{f(j,k)}p_k^{-(s_1+\alpha+\beta) f(j,k)}\bigg)^{-1}.
\end{equation}
This series, for all $\omega_{1h} \in \Omega_{1h}$, converges absolutely for $\Re s_1>1$. From the properties of Dirichlet series it follows that this series converges uniformly almost surely on any compact subsets of  $D_1$. Therefore, $\varphi_h(s_1,\omega_{1h})$ is an $H(D_1)$-valued random element defined on the probability space $(\Omega_{1h},{\mathcal B}(\Omega_{1h}),m_{H1h})$.

On $(\Omega_{2},{\mathcal B}(\Omega_{2}),m_{H2})$,    define an $H(D_2)$-valued random element $\zeta(s_2,\alpha,\omega_2;\gb)$ by
$$
\zeta(s_2,\alpha,\omega_2;\gb)=\sum_{m \in \no}
\frac{b_m\omega_2(m)}{(m+\alpha)^{s_2}}
$$
for $\omega_2 \in \Omega_2$ (for the details, see \cite{AL-2006}).

Now, for $\omega_h=(\omega_{1h},\omega_2)\in \Omega_h$ and $\underline{s}=(s_1,s_2) \in D_1\times D_2$, on $(\Omega_h,{\mathcal B }(\Omega_h),m^h_H)$, define an $\uH$-valued random element
$$
\underline{Z}_h(\underline{s},\omega_h)=(\varphi_h(s_1,\omega_{1h}),\zeta(s_2,\alpha,\omega_2;\gb)),
$$
and let $P_{{\uZ}h}$ be the distribution of this element, i.e.,
$$
P_{{\uZ}h}(A)=m^h_H\big\{\omega_h\in\Omega_h :\; \underline{Z}_h(\underline{s},\omega_h)\in A\big\},
\quad A\in\mathcal{B}(\uH).
$$

In this section, we prove the following theorem.

\begin{theorem}\label{rk-th-4}
Let $\varphi\in\mathcal{M}$ (and so $\varphi_h\in\mathcal{M}$).
Suppose that $\alpha$ is transcendental, $h>0$, and $\exp\big\{\frac{2 \pi}{h}\big\}$ is rational. Then $P_{Nh}$ converges weakly to $P_{\uZ h}$ as $N \to \infty$.
\end{theorem}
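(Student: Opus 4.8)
The plan is to establish the weak convergence of $P_{Nh}$ to $P_{\uZ h}$ by the standard Bagchi-type strategy, adapted to the discrete setting and to the arithmetic peculiarity that $\exp\{2\pi/h\}$ is rational. First I would prove a limit theorem on the torus $\Omega_h = \Omega_{1h}\times\Omega_2$. Consider the map sending $k \mapsto \uom_h^{(k)}$, where $\uom_h^{(k)}$ has coordinates $(p^{-ikh})_{p\in\pp_h}$ on $\Omega_{1h}$ and $((m+\alpha)^{-ikh})_{m\in\no}$ on $\Omega_2$. Let $Q_{Nh}$ be the empirical measure of this sequence, i.e. $Q_{Nh}(A)=\frac{1}{N+1}\#\{0\le k\le N : \uom_h^{(k)}\in A\}$. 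The key claim is that $Q_{Nh}$ converges weakly to the Haar measure $m_H^h$ on $\Omega_h$. By the Fourier-analytic criterion for weak convergence on compact abelian groups, this reduces to showing that for every nontrivial character $\chi$ of $\Omega_h$, the averaged character sum $\frac{1}{N+1}\sum_{k=0}^{N}\chi(\uom_h^{(k)})$ tends to $0$.

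The characters of $\Omega_h$ are indexed by finitely supported integer exponents $(\ell_p)_{p\in\pp_h}$ and $(n_m)_{m\in\no}$, and the corresponding character sum becomes a geometric-type sum with ratio $\exp\{ih(\sum_p \ell_p\log p + \sum_m n_m\log(m+\alpha))\}$. This is where the arithmetic hypotheses enter, and this step is the main obstacle. In the earlier Theorem~\ref{rk-th-2} one assumed full $\qq$-linear independence of $L(\pp,\alpha,h)$, which instantly forces the ratio to differ from $1$ and the geometric sum to be $O(1)$. Here we only know $\exp\{2\pi/h\}=a/b$ is rational, so $2\pi/h = \log(a/b)$ lies in the $\qq$-span of $\{\log p : p\in\pp_p\}$, and the argument must be reorganized: the factor $\exp\{ih\sum_p\ell_p\log p\}$ equals $1$ precisely when $\sum_p\ell_p\log p$ is an integer multiple of $2\pi/h = \log(a/b)$, i.e. when $\prod_p p^{\ell_p}$ is a power of $a/b$. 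Because the product over $p\in\pp_h$ involves only primes coprime to $a$ and $b$, and because $\alpha$ is transcendental so $\{\log(m+\alpha)\}$ together with $\{\log p\}$ remains $\qq$-linearly independent after excluding $\pp_p$, the unique integer factorization guarantees that for a nontrivial character the combined exponent $\sum_p\ell_p\log p + \sum_m n_m\log(m+\alpha)$ is never an integer multiple of $2\pi/h$. Hence the geometric ratio is never a root of unity equal to $1$, the sum is bounded uniformly in $N$, and $Q_{Nh}\Rightarrow m_H^h$.

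Having the torus limit theorem, I would transfer it to $\uH$ in stages, exactly as in \cite{RK-KM-2015} and \cite{RK-KM-2017-Pal}. First pass to finite truncations $\vf_{h,n}(s_1,\omega_{1h})$ and $\zeta_n(s_2,\alpha,\omega_2;\gb)$ of the defining series, which are continuous functions of $\uom_h\in\Omega_h$; applying the continuous mapping theorem to $Q_{Nh}\Rightarrow m_H^h$ yields weak convergence of the associated truncated distributions to the law of the truncated random element. Second, I would remove the truncation by an approximation-in-mean argument: using the mean-value estimate \eqref{rk-eq-2-5} for $\vf_h$ (inherited since $\vf_h\in\mathcal{M}$) and the corresponding estimate for the periodic Hurwitz zeta-function, one shows that the tails are negligible in the metric inducing the topology of $\uH$, so the truncated and full distributions are asymptotically equal in the Prokhorov sense. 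Third, I would verify tightness of the family $\{P_{Nh}\}$ via the same mean-value bounds, so that relative compactness holds. Finally, identifying every weak limit point with $P_{\uZ h}$ through the truncation scheme, and invoking the ergodicity of the shift on $(\Omega_h, m_H^h)$ to pin down the limit uniquely, gives $P_{Nh}\Rightarrow P_{\uZ h}$. The genuinely new content is confined to the torus step; the transfer machinery is routine and parallels the cited proofs, so I would present it compactly and concentrate the detailed work on the character-sum estimate under the rationality of $\exp\{2\pi/h\}$.
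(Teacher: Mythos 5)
Your proposal is correct and follows essentially the same route as the paper: your torus step is exactly Lemma~\ref{rk-le-1} (the character-sum argument using $2\pi/h=\log(a/b)$, unique factorization over $\pp_h$, and the transcendence of $\alpha$ to rule out the ratio being $1$), and your transfer to $\uH$ via absolutely convergent approximating series, approximation in the mean, tightness, and the ergodicity of the shift on $(\Omega_h,m_H^h)$ matches Lemmas~\ref{rk-le-2}--\ref{rk-le-5} and the concluding identification of the limit. The only cosmetic discrepancy is that the paper's $\varphi_{h,n}$ and $\zeta_n$ are exponentially smoothed (still infinite) series rather than finite truncations, which does not affect the scheme.
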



Before starting the proof of Theorem \ref{rk-th-4}, we should remark that the method developed in \cite{RK-KM-2017-Pal} can be applied in the case of $\varphi_h(s)$ too. Therefore we skip some standard details,
and focus onto the points which are essential in the proof.

We begin the proof of Theorem~\ref{rk-th-4} with a mixed joint discrete limit theorem on torus $\Omega_h$.   Define the measure $Q_{Nh}$ on $(\Omega_h, {\mathcal B}(\Omega_h))$ by
$$
Q_{Nh}(A):=\frac{1}{N+1}\# \bigg\{0 \leq k \leq N: \big(\big(p^{-ikh}: p \in \pp_h\big), \big((m+\alpha)^{-ikh}: m \in \no \big)\big)\in A\bigg\},
$$
$A \in {\mathcal B}(\Omega_h)$.

\begin{lemma}\label{rk-le-1}
Suppose that the hypotheses of Theorem~\ref{rk-th-4} are satisfied. Then $Q_{Nh}$ converges weakly to the Haar measure $m^h_H$ as $N \to \infty$.
\end{lemma}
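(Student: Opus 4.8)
The plan is to prove Lemma~\ref{rk-le-1} via the Fourier-analytic (characters) method, which is the standard route for such weak-convergence-to-Haar-measure statements on compact abelian groups. Since $\Omega_h=\Omega_{1h}\times\Omega_2$ is a compact topological abelian group, weak convergence of $Q_{Nh}$ to $m_H^h$ is equivalent to convergence of the Fourier transforms: by a continuity-theorem argument on the dual group, it suffices to show that for every character of $\Omega_h$ the integral $\int_{\Omega_h}\chi\,dQ_{Nh}$ converges, as $N\to\infty$, to $\int_{\Omega_h}\chi\,dm_H^h$, the latter being $1$ for the trivial character and $0$ otherwise. The dual group of $\Omega_h$ is generated by finitely supported integer tuples $(\underline{k},\underline{l})$ with $\underline{k}=(k_p)_{p\in\pp_h}$ and $\underline{l}=(l_m)_{m\in\no}$, so the characters take the form $\omega_h\mapsto\prod_{p\in\pp_h}\omega_{1h}(p)^{k_p}\prod_{m\in\no}\omega_2(m)^{l_m}$.

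The key computation is to evaluate $\int_{\Omega_h}\chi\,dQ_{Nh}$ explicitly. Feeding the defining point $\big((p^{-ikh})_{p\in\pp_h},((m+\alpha)^{-ikh})_{m\in\no}\big)$ into the character, I get
$$
\int_{\Omega_h}\chi\,dQ_{Nh}=\frac{1}{N+1}\sum_{k=0}^{N}\exp\bigg\{-ikh\Big(\sum_{p\in\pp_h}k_p\log p+\sum_{m\in\no}l_m\log(m+\alpha)\Big)\bigg\}.
$$
Writing $A:=\sum_{p\in\pp_h}k_p\log p+\sum_{m\in\no}l_m\log(m+\alpha)$, this is a finite geometric sum, equal to $1$ when $e^{-ihA}=1$ (i.e.\ $hA\in 2\pi\zz$) and equal to $\frac{1}{N+1}\cdot\frac{1-e^{-i(N+1)hA}}{1-e^{-ihA}}=O\big(\frac{1}{N+1}\big)$ otherwise, which tends to $0$. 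So everything reduces to deciding exactly when $hA\in2\pi\zz$, i.e.\ when $\frac{h}{2\pi}A\in\zz$, for a nontrivial character.

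The main obstacle — and the one genuinely new point compared to Theorem~\ref{rk-th-2} — is this arithmetic rigidity argument under the hypothesis $\exp\{\frac{2\pi}{h}\}\in\qq$ rather than the linear-independence hypothesis of Theorem~\ref{rk-th-2}. Here the restriction to $\pp_h$ (primes \emph{not} dividing the numerator or denominator of $a/b$) is essential. The condition $hA\in2\pi\zz$ says $\frac{h}{2\pi}\big(\sum k_p\log p+\sum l_m\log(m+\alpha)\big)\in\zz$; exponentiating and using $e^{2\pi/h}=a/b$, one finds this is equivalent to $\prod_{p\in\pp_h}p^{k_p}\cdot\prod_m(m+\alpha)^{l_m}=(a/b)^{n}$ for some $n\in\zz$. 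The plan is to argue in two stages. First, the transcendence of $\alpha$ forces all $l_m=0$: the quantity $\prod_m(m+\alpha)^{l_m}$ is an algebraic (indeed rational) expression in $\alpha$ that can equal the rational $(a/b)^n/\prod p^{k_p}$ only if $\alpha$ satisfies a polynomial relation, which is impossible unless all $l_m$ vanish. Second, with $l_m\equiv0$, the relation becomes $\prod_{p\in\pp_h}p^{k_p}=(a/b)^n$ with the left side composed only of primes in $\pp_h$ and the right side, by construction of $\pp_p$, composed only of primes in $\pp_p=\{q_i,r_j\}$; since $\pp_h\cap\pp_p=\varnothing$, unique factorization forces $n=0$ and then every $k_p=0$. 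Thus $hA\in2\pi\zz$ holds only for the trivial character, completing the verification that $\int\chi\,dQ_{Nh}\to\mathbf{1}[\chi\text{ trivial}]$ and hence, by the continuity theorem for compact abelian groups, that $Q_{Nh}\Rightarrow m_H^h$.
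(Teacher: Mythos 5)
Your proposal is correct and takes essentially the same route as the paper's own proof: you compute the Fourier transform of $Q_{Nh}$ as a geometric sum, and verify its nontrivial ratio by the identical two-step arithmetic argument (transcendence of $\alpha$ forces all $l_m=0$; then $\prod_{p\in\pp_h}p^{k_p}=\exp\{-\frac{2\pi r}{h}\}\in\qq$ is ruled out by unique factorization, since $\pp_h$ excludes precisely the primes of $a$ and $b$), concluding with the continuity theorem for probability measures on compact groups. No gaps.
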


\begin{proof}
Let $(\underline{k},\underline{l}):=\big((k_p: p \in \pp_h),(l_m: m\in \no)\big)$, where only a finite number of integers $k_p$ and $l_m$ are distinct from zero.
We apply the Fourier transform method (for the details, see \cite{AL-1996}).
In view of the definition of the measure $Q_{Nh}$, the Fourier transform $g_{Nh}$ of the measure
$Q_{Nh}$ is of the form
\begin{eqnarray}\label{rk-fo-5}
g_{Nh}(\underline{k},\underline{l})&=&\int_{\Omega}\bigg(\prod_{p \in \pp_h}\omega_{1h}^{k_p}(p)\prod_{m \in \no}\omega_2^{l_m}(m)\bigg)\d Q_{N,h}\cr
&=&\frac{1}{N+1}\sum_{k=0}^{N}\prod_{p \in \pp_h}p^{-ikk_ph}\prod_{m \in \no}(m+\alpha)^{-ikl_mh}\cr
&=&
\frac{1}{N+1}\sum_{k=0}^{N}
\exp\bigg\{-ikh\bigg(\sum_{p \in \pp_h}k_p\log p+\sum_{m \in \no}l_m \log(m+\alpha)\bigg)\bigg\}.
\end{eqnarray}
Clearly, for $(\underline{k},\underline{l})=(\underline 0, \underline 0)$,
\begin{equation}\label{rk-fo-5-5}
g_{Nh}(\underline{k},\underline{l})=1.
\end{equation}

Since $\alpha$ is transcendental and $\exp\big\{\frac{2 \pi}{h}\big\}$ is rational, we have
\begin{equation}\label{rk-fo-6}
\exp\bigg(-ih\bigg(\sum_{p \in \pp_h}k_p\log p+\sum_{m \in \no}l_m \log(m+\alpha)\bigg)\bigg)\not =1
\end{equation}
for $(\underline{k},\underline{l})\not =(\underline 0, \underline 0)$.
In fact, if \eqref{rk-fo-6} is not true, then, for some integer $r$,
$$
-ih\bigg(\sum_{p \in \pp_h}k_p\log p+\sum_{m \in \no}l_m \log(m+\alpha)\bigg)=2 \pi i r.
$$
Then
$$
\sum_{p \in \pp_h}k_p\log p+\sum_{m \in \no}l_m \log(m+\alpha)=-\frac{2 \pi r}{h},
$$
and, taking the exponentials, we obtain
\begin{equation}\label{rk-fo-7}
\prod_{p \in \pp_h}p^{k_p}\prod_{m \in \no}(m+\alpha)^{l_m} =\exp\bigg(-\frac{2 \pi r}{h}\bigg).
\end{equation}
Since the right-hand side is rational, if some $l_m\neq 0$, then \eqref{rk-fo-7}
contradicts with the assumption that $\alpha$ is transcendental.
Therefore all $l_m=0$, and \eqref{rk-fo-7} reduces to
$$
\prod_{p \in \pp_h}p^{k_p}=\exp\bigg(-\frac{2 \pi r}{h}\bigg).
$$
But this is impossible
in view of the definition of $\pp_h$.    Therefore \eqref{rk-fo-6} is valid.

Therefore, in the case $(\underline{k},\underline{l})\not =(\underline 0, \underline 0)$
from \eqref{rk-fo-5}, we obtain that
\begin{eqnarray}\label{rk-fo-8}
&&g_{Nh}(\underline{k},\underline{l})\cr
&& \ =
\frac{1}{N+1}\frac{1-\exp\big\{-i(N+1)h\big(\sum_{p \in \pp_h}k_p\log p+ \sum_{m\in \no}l_m\log(m+\alpha)\big)\big\}}
{1-\exp\big\{-ih\big(\sum_{p \in \pp_h}k_p\log p+\sum_{m \in \no}l_m \log(m+\alpha)\big)\big\}}.
\end{eqnarray}
Consequently, from \eqref{rk-fo-5-5} and \eqref{rk-fo-8}, we have
$$
\lim\limits_{N \to \infty}g_{Nh}(\underline{k},\underline{l})=
\begin{cases}
1, &\text{if} \quad (\underline{k},\underline{l})=(\underline{0},\underline{0}),\cr
0, &\text{otherwise}.
\end{cases}
$$

Therefore, applying the continuity theorem on probability measures on compact groups (see \cite{HH-1977}), we obtain the assertion of the lemma.
\end{proof}

Suppose that $\sigma_1>\frac{1}{2}$ is fixed, and let
$$
v_1(m,n)=\exp\bigg\{-\bigg(\frac{m}{n}\bigg)^{\sigma_1}\bigg\} \quad \text{for} \quad m,n \in \nn,
$$
and
$$
v_2(m,n,\alpha)=\exp\bigg\{-\bigg(\frac{m+\alpha}{n+\alpha}\bigg)^{\sigma_1}\bigg\} \quad \text{for} \quad m \in \no, \quad n \in \nn.
$$
Define, for $n\in\nn$ and $\widehat{\omega}_h=(\ho_{1h},\ho_2)\in \Omega_h$,
\begin{eqnarray*}
\varphi_{h,n}(s)&=&\sum_{k \in \nn_m}\frac{c_k v_1(k,n)}{k^s} \quad \text{and} \quad
\zeta_n(s,\alpha;{\gb})=
\sum_{m \in \no}\frac{b_m v_2(m,n,\alpha)}{(m+\alpha)^s},
\end{eqnarray*}
\begin{eqnarray*}
\varphi_{h,n}(s,\ho_{1h})=
\sum_{k \in \nn_m}\frac{c_k\ho_{1h}(k) v_1(k,n)}{k^s}
\end{eqnarray*}
and
\begin{eqnarray*}
\zeta_n(s,\alpha, {\widehat{\omega}}_2;\gb)=
\sum_{m \in \no}\frac{b_m {\widehat{\omega}}_2(m) v_2(m,n,\alpha)}{(m+\alpha)^s}.
\end{eqnarray*}
These series are absolutely convergent for $\sigma>\frac{1}{2}$ (for the comments, see \cite{RK-KM-2015}). Now we will consider the weak convergence of the measures
$$
P_{Nh,n}(A):=\frac{1}{N+1}\# \bigg\{0 \leq k \leq N: \uZ_{h,n}(\us+ikh)\in A\bigg\}
$$
and
$$
{\widehat P}_{Nh,n}(A):=\frac{1}{N+1}\# \bigg\{0 \leq k \leq N: \uZ_{h,n}(\us+ikh, \ho_h)\in A\bigg\}
$$
for $A \in {\mathcal B}(\uH)$, where
$$
\underline{Z}_{h,n}(\us)=
(\vf_{h,n}(s_1),\zeta_n(s_2,\alpha;\gb))
$$
and
$$
\underline{Z}_{h,n}(\us,\ho_h)=
(\vf_{h,n}(s_1,\ho_{1h}),
\zeta_n(s_2,\alpha,\ho_2;\gb)).
$$

\begin{lemma}\label{rk-le-2}
Suppose that the conditions of Theorem~\ref{rk-th-4} hold. Then $P_{Nh,n}$ and ${\widehat P}_{Nh,n}$ both converge weakly to a same probability measure $P_n$ on
$(\uH,\mathcal{B}(\uH))$ as $N\to\infty$.
\end{lemma}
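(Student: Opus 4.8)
The plan is to exhibit both $P_{Nh,n}$ and $\hP_{Nh,n}$ as images of torus measures already controlled by Lemma~\ref{rk-le-1}, and then to transport weak convergence through a single continuous map. First I would introduce the map $u_h\colon\Omega_h\to\uH$ given by
$$
u_h(\omega_h)=\big(\varphi_{h,n}(s_1,\omega_{1h}),\,\zeta_n(s_2,\alpha,\omega_2;\gb)\big),\qquad \omega_h=(\omega_{1h},\omega_2).
$$
Since the cut-off factors $v_1(k,n)$ and $v_2(m,n,\alpha)$ decay exponentially, both defining series converge absolutely and uniformly on compact subsets of $D_1$ and $D_2$ for \emph{every} fixed $\omega_h$, and each partial sum depends continuously on the coordinates $\omega_{1h}(p)$ and $\omega_2(m)$; hence $u_h$ is a continuous map of $\Omega_h$ into $\uH$.

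The algebraic heart of the argument is that the discrete shift by $ikh$ acts on these series precisely as multiplication by the torus point
$$
a_k:=\big((p^{-ikh}:p\in\pp_h),\,((m+\alpha)^{-ikh}:m\in\no)\big).
$$
Indeed, for $k\in\nn_m$ every prime divisor of $k$ lies in $\pp_h$, so writing $k=\prod_{p\in\pp_h}p^{\nu_p}$ gives $k^{-ikh}=\prod_{p\in\pp_h}(p^{-ikh})^{\nu_p}$, which is exactly $\omega_{1h}(k)$ when $\omega_{1h}(p)=p^{-ikh}$; the analogous identity holds for $\zeta_n$ with $\omega_2(m)=(m+\alpha)^{-ikh}$. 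Consequently $\uZ_{h,n}(\us+ikh)=u_h(a_k)$, so that $P_{Nh,n}=Q_{Nh}u_h^{-1}$. By Lemma~\ref{rk-le-1} the measures $Q_{Nh}$ converge weakly to $m^h_H$, and since $u_h$ is continuous the mapping theorem for weak convergence yields
$$
P_{Nh,n}\;\Longrightarrow\;m^h_Hu_h^{-1}=:P_n.
$$

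For $\hP_{Nh,n}$ the same computation, now keeping the fixed character $\ho_h$, gives $\uZ_{h,n}(\us+ikh,\ho_h)=u_h(\ho_h\cdot a_k)$, the product taken coordinatewise in $\Omega_h$. Thus $\hP_{Nh,n}=\widehat Q_{Nh}u_h^{-1}$, where $\widehat Q_{Nh}(A)=Q_{Nh}(\ho_h^{-1}A)$ is the translate of $Q_{Nh}$ by $\ho_h$. Running the Fourier-transform computation of Lemma~\ref{rk-le-1} for $\widehat Q_{Nh}$ introduces only the extra unimodular constant $\prod_{p\in\pp_h}\ho_{1h}^{k_p}(p)\prod_{m\in\no}\ho_2^{l_m}(m)$, which equals $1$ at $(\underline k,\underline l)=(\underline 0,\underline 0)$; hence the limiting Fourier transform is unchanged and $\widehat Q_{Nh}\Rightarrow m^h_H$ as well, in accordance with the translation invariance of Haar measure. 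A second application of the mapping theorem then gives $\hP_{Nh,n}\Rightarrow m^h_Hu_h^{-1}=P_n$, the same limit, which is the assertion.

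The one genuinely technical point---and the step I would write out in full---is the continuity and global well-definedness of $u_h$ on all of $\Omega_h$, i.e. the uniform-on-compacta absolute convergence of the smoothed series for an arbitrary (not merely almost every) $\omega_h$. This is exactly what the exponential weights $v_1,v_2$ were designed to secure; once it is in hand, the remainder is the routine transport of the weak limit of Lemma~\ref{rk-le-1} through the continuous map $u_h$, together with the invariance of the Haar measure $m^h_H$ under the fixed translation $\ho_h$.
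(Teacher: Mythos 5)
Your proof is correct and takes essentially the same route as the paper: the paper's proof consists precisely of combining Lemma~\ref{rk-le-1} with Theorem~5.1 of \cite{PB-1968} (the continuous mapping theorem you invoke) and the argument of Lemma~3.2 in \cite{RK-KM-2015}, which handles $\hP_{Nh,n}$ via the translation invariance of $m^h_H$ exactly as you do. The details you write out --- continuity of $u_h$ secured by the exponential weights $v_1,v_2$, the identity $\uZ_{h,n}(\us+ikh)=u_h(a_k)$ giving $P_{Nh,n}=Q_{Nh}u_h^{-1}$, and the translated measure $\widehat{Q}_{Nh}\Rightarrow m^h_H$ --- are precisely the steps the paper delegates to those references.
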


\begin{proof}
Because of the absolute convergence of the series for $\varphi_{h,n}(s)$, $\varphi_{h,n}(s,\widehat{\omega}_{1h})$, $\zeta_n(s,\alpha;{\gb})$ and $\zeta_n(s,\alpha,\widehat{\omega}_2;{\gb})$,
we can use Lemma~\ref{rk-le-1}, and Theorem~5.1 in \cite{PB-1968}, and argue in a way similar to the proof of Lemma~3.2 in \cite{RK-KM-2015}, to obtain the statement of the lemma.
\end{proof}

Now we need to pass from $\uZ_{h,n}(\us)$ to $\uZ_h(\us)$ and from $\uZ_{h,n}(\us,\omega_h)$ to $\uZ_h(\us,\omega_h)$, respectively. This can be done by using the approximation method together
with Lemma~\ref{rk-le-2}. For this purpose, we introduce a metric on $\uH$.

It is known that, for any open region $G$, there exists a sequence of compact sets $\{K_l: l\in \nn\}\subset {G}$ such that $G=\bigcup\limits_{l=1}^\infty K_l$, $K_l \subset K_{l+1}$ for all $l \in \nn$,
and, if $K$ is a compact set, then $K \subset K_l$ for some $l \in \nn$. For the functions $g_1,g_2 \in H(G)$, let
$$
\varrho_G(g_1,g_2)=\sum_{l=1}^{\infty}{2^{-l}}\frac{\sup_{s \in K_l}|g_1(s)-g_2(s)|}{1+\sup_{s \in K_l}|g_1(s)-g_2(s)|}.
$$
Set $\varrho_1=\varrho_{D_1}$ and $\varrho_2 =\varrho_{D_2}$. For
$\underline{g}_1=(g_{11},g_{21})$ and $\underline{g}_2=(g_{12},g_{22})\in \uH$,
$$
{\underline \varrho}(\underline{g}_1,\underline{g}_2)=\max\big\{\varrho_1(g_{11},g_{12}),\varrho_2(g_{21},g_{22})\big\}.
$$
Then ${\underline \varrho}(\underline{g}_1,\underline{g}_2)$ is a metric on the space $\uH$  which induces its topology of uniform convergence on compacta.


\begin{lemma}\label{rk-le-3}
Under conditions of Theorem~\ref{rk-th-4}, the  following relations hold:
\begin{equation}\label{rk-fo-9}
\lim_{n\to\infty}\limsup_{N\to\infty}\frac{1}{N+1}\sum_{k=0}^{N}
\underline{\rho}\big(\uZ_h(\us+i kh),
\uZ_{h,n}(\us+ikh)\big)=0
\end{equation}
and, for almost all $\omega_h\in\Omega_h$,
\begin{equation}\label{rk-fo-10}
\lim_{n\to\infty}\limsup_{N\to\infty}\frac{1}{N+1}\sum_{k=0}^{N}
\underline{\rho}\big(\uZ_h(\us+ikh,\omega_h),
\uZ_{h,n}(\us+ikh,\omega_h)\big)=0.
\end{equation}
\end{lemma}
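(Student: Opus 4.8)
The plan is to reduce both relations to a scalar discrete second-moment estimate for each coordinate. Since $\underline{\varrho}=\max\{\varrho_1,\varrho_2\}\le\varrho_1+\varrho_2$, it suffices to prove each of \eqref{rk-fo-9}, \eqref{rk-fo-10} separately for the $\varphi_h$-coordinate (in the metric $\varrho_1$ on $H(D_1)$) and for the $\zeta$-coordinate (in $\varrho_2$ on $H(D_2)$). Unfolding $\varrho_{D_1}$ through its exhausting compacta $\{K_l\}$ and weights $2^{-l}$, using $\tfrac{x}{1+x}\le1$ to discard the tail $\sum_{l>L}2^{-l}$ and $\tfrac{x}{1+x}\le x$ on the finitely many remaining terms, the problem collapses to showing, for every fixed compact $K\subset D_1$,
\begin{equation*}
\lim_{n\to\infty}\limsup_{N\to\infty}\frac{1}{N+1}\sum_{k=0}^{N}\sup_{s\in K}\bigl|\varphi_h(s+ikh)-\varphi_{h,n}(s+ikh)\bigr|=0.
\end{equation*}
The $\zeta$-coordinate can be treated verbatim as in \cite{RK-KM-2015,RK-KM-2017-Pal}, using the classical mean-square bounds for the periodic Hurwitz zeta-function, so I would concentrate on the $\varphi_h$-coordinate, which is the genuinely new ingredient.

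The main device is a Mellin--Barnes representation. Fix $K\subset D_1\subset D_{\varphi}$, and choose a real $\theta<0$ with $\sigma_0-\min_{s\in K}\Re s<\theta$, so that the translate $K+\theta$ is a compact subset of $\{\sigma>\sigma_0\}$. From the identity $v_1(\ell,n)=\frac{1}{2\pi i}\int_{(\theta_0)}\frac{1}{\sigma_1}\Gamma(\frac{z}{\sigma_1})(n/\ell)^{z}\,\d z$ $(\theta_0>0)$ and termwise integration I obtain
\begin{equation*}
\varphi_{h,n}(s)=\frac{1}{2\pi i}\int_{(\theta_0)}\varphi_h(s+z)\,l_n(z)\,\d z,\qquad l_n(z)=\frac{1}{\sigma_1}\Gamma\Bigl(\frac{z}{\sigma_1}\Bigr)n^{z}.
\end{equation*}
Shifting the contour to $\Re z=\theta$ and collecting the residue at $z=0$ (equal to $\varphi_h(s)$, as $l_n$ has residue $1$ there) together with the residues at the finitely many poles of $\varphi_h$, I get
\begin{equation*}
\varphi_h(s)-\varphi_{h,n}(s)=-\frac{1}{2\pi i}\int_{(\theta)}\varphi_h(s+z)\,l_n(z)\,\d z+R_n(s).
\end{equation*}
The residue terms $R_n(s)$ carry factors $n^{\Re s_j(\varphi)-\Re s}$, and because $K$ avoids the real parts of the poles they tend to $0$ uniformly on $K$ exactly as in \cite{RK-KM-2017-Pal}.

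To finish \eqref{rk-fo-9} I would bound, for $s\in K$,
\begin{equation*}
\bigl|\varphi_h(s+ikh)-\varphi_{h,n}(s+ikh)\bigr|\ll n^{\theta}\int_{-\infty}^{\infty}\bigl|\varphi_h(s+\theta+ikh+it)\bigr|\,\Bigl|\Gamma\Bigl(\tfrac{\theta+it}{\sigma_1}\Bigr)\Bigr|\,\d t+|R_n(s)|,
\end{equation*}
take $\sup_{s\in K}$, average over $k$, and apply Cauchy--Schwarz. The key ingredient is the discrete second moment
\begin{equation*}
\frac{1}{N+1}\sum_{k=0}^{N}\sup_{s\in K+\theta}\bigl|\varphi_h(s+ikh+it)\bigr|^{2}\ll(1+|t|)^{A}
\end{equation*}
uniformly in $N$; this is where $\varphi_h\in\mathcal{M}$ enters, via property (iii), the growth bound (ii), and a Gallagher-type lemma converting the continuous mean square into a discrete average along the progression $kh$. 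Granting this, the $\limsup_N$ is dominated by $n^{\theta}\int_{-\infty}^{\infty}|\Gamma(\frac{\theta+it}{\sigma_1})|(1+|t|)^{A/2}\,\d t$, a convergent integral by the exponential decay of $\Gamma$; since $\theta<0$ this is $\ll n^{\theta}\to0$, proving \eqref{rk-fo-9}.

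For the almost-sure relation \eqref{rk-fo-10} the same representation holds with $\varphi_h(s+z,\omega_{1h})$, since twisting by the completely multiplicative $\omega_{1h}$ commutes with the smoothing; only the mean-value input changes. I would replace the deterministic bound by Birkhoff's ergodic theorem applied to the measure-preserving map $\Phi_h(\omega_{1h},\omega_2)=\bigl((p^{-ih}\omega_{1h}(p))_{p\in\pp_h},((m+\alpha)^{-ih}\omega_2(m))_{m\in\no}\bigr)$ on $(\Omega_h,m^h_H)$. Because $\varphi_h(s,\Phi_h^{k}\omega_{1h})=\varphi_h(s+ikh,\omega_{1h})$, the ergodic theorem gives, for almost all $\omega_{1h}$,
\begin{equation*}
\frac{1}{N+1}\sum_{k=0}^{N}\bigl|\varphi_h(\sigma_2+it,\Phi_h^{k}\omega_{1h})\bigr|^{2}\longrightarrow\sum_{\ell\in\nn_m}\frac{|c_\ell|^{2}}{\ell^{2\sigma_2}},
\end{equation*}
a finite constant independent of $t$, which plays the role of the discrete second moment and yields \eqref{rk-fo-10} by the identical $n^{\theta}\to0$ estimate. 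The hard part will be the ergodicity of $\Phi_h$ under the present hypothesis $\exp\{\frac{2\pi}{h}\}\in\qq$ (rather than the $\qq$-linear independence of \cite{RK-KM-2017-Pal}): a $\Phi_h$-invariant character forces precisely relation \eqref{rk-fo-6}, which by the argument of Lemma~\ref{rk-le-1} holds only for the trivial character once $\alpha$ is transcendental and $\exp\{\frac{2\pi}{h}\}$ is rational; ergodicity, and with it the whole lemma, then follows.
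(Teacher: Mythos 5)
Your route is, in substance, the paper's own: the authors dispose of this lemma in one line, saying that once the ergodicity of $\Phi_h$ is secured (their Lemma~\ref{rk-le-5}, proved by exactly the character computation you sketch, resting on \eqref{rk-fo-6}), the proof of Lemma~3 in \cite{RK-KM-2017-Pal} carries over to $\vph$ unchanged --- and that proof is precisely your machinery: reduction of $\underline{\varrho}$ to suprema over compacta, the Mellin--Barnes representation with kernel $l_n(z)=\sigma_1^{-1}\Gamma(z/\sigma_1)n^{z}$, a contour shift to $\Re z=\theta<0$, a discrete (Gallagher-type) second-moment bound drawn from properties (ii)--(iii) of the class $\mathcal{M}$ for \eqref{rk-fo-9}, and the Birkhoff ergodic theorem plus Lemma~\ref{rk-le-5} for \eqref{rk-fo-10}. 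You have also correctly identified that the only genuinely new ingredient under the hypothesis $\exp\{\frac{2\pi}{h}\}\in\qq$ is the ergodicity lemma; everything else is inherited.

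One step of your sketch is wrong as stated, though repairable within your own framework: the residue terms $R_n(s)$ do \emph{not} tend to $0$ uniformly on $K$ as $n\to\infty$. They carry the factor $n^{\Re s_j(\varphi)-\Re s}$, and in the principal application $\varphi\in\st$ has its pole at $s=1$ while $K\subset D(\sigma^*,1)$ lies to its \emph{left}, so $\Re s_j(\varphi)-\Re s=1-\Re s>0$ and $n^{1-\Re s}\to\infty$. The correct mechanism (the one used in \cite{RK-KM-2017-Pal} and in Laurin\v cikas's continuous prototype) is that the residue contribution dies in the \emph{other} limit: for fixed $n$, the exponential decay of $\Gamma(z/\sigma_1)$ along vertical lines gives
\begin{equation*}
\sup_{s\in K}\bigl|l_n(s_j(\varphi)-s-ikh)\bigr|\leq C(n)\exp\{-c\,kh\},
\end{equation*}
so that $\frac{1}{N+1}\sum_{k=0}^{N}\sup_{s\in K}|R_n(s+ikh)|\leq C'(n)(N+1)^{-1}\to 0$ as $N\to\infty$, which suffices because the iterated limit in \eqref{rk-fo-9} and \eqref{rk-fo-10} takes $\limsup_{N\to\infty}$ first. (Two smaller points to tidy: $\theta$ must be chosen so that the shifted contour avoids the points $s_j(\varphi)-s$, $s\in K$ --- this is where $K\subset D_{\varphi}$, avoiding the lines $\sigma=\Re s_j(\varphi)$, is actually used; and in the Birkhoff step the exceptional null set must be made independent of $t$, e.g.\ via a countable dense set of values and Cauchy's integral formula, before integrating against the $\Gamma$-kernel.) With these corrections your proposal matches the intended proof.
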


To prove this lemma, we need some elements from ergodic theory.

Let $f_h=\big\{(p^{-ih}: p \in \pp_h), \big((m+\alpha)^{-ih}: m \in \no\big)\big\}$, and, on the pro\-ba\-bi\-li\-ty space $(\Omega_h,{\mathcal B}(\Omega_h),m^h_H)$,
define the measurable measure-preserving transformation $\Phi_h: \Omega_h \to \Omega_h$ by the formula
$\Phi_h(\omega_h)=f_h\omega_h$ for $\omega_h \in \Omega_h$.    Recall that a set $A \in {\mathcal B}(\Omega_h)$ is called invariant with respect to $\Phi_h$ if the sets $A$ and $\Phi_h(A)$ differ only by a set of zero $m_H^h$-measure, and
the transformation $\Phi_h$ is ergodic if its $\sigma$-field of invariant sets consists only of sets having $m_H^h$-measure equal to 0 or 1.

\begin{lemma}\label{rk-le-5}
	Suppose that $\alpha$ is transcendental, $h>0$, and $\exp\big\{\frac{2 \pi}{h}\big\}$ is rational. Then the transformation $\Phi_h$ is ergodic.
\end{lemma}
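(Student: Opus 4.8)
The plan is to use the standard Fourier-analytic criterion for ergodicity of a rotation on a compact abelian group: the measure-preserving transformation $\Phi_h$ is ergodic with respect to $m^h_H$ if and only if every $\Phi_h$-invariant function $g\in L^2(\Omega_h,m^h_H)$ is constant almost everywhere. First I would take such a $g$, so that $g(\Phi_h(\omega_h))=g(\omega_h)$ for $m^h_H$-almost all $\omega_h$, and expand it in its Fourier series with respect to the characters of the compact abelian group $\Omega_h=\Omega_{1h}\times\Omega_2$.

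The characters of $\Omega_h$ are exactly the maps
$$
\chi_{(\underline{k},\underline{l})}(\omega_h)=\prod_{p\in\pp_h}\omega_{1h}^{k_p}(p)\prod_{m\in\no}\omega_2^{l_m}(m),
$$
indexed by pairs $(\underline{k},\underline{l})$ of integer sequences with only finitely many nonzero entries, so that $g(\omega_h)=\sum_{(\underline{k},\underline{l})}\widehat g(\underline{k},\underline{l})\,\chi_{(\underline{k},\underline{l})}(\omega_h)$. Since $\Phi_h(\omega_h)=f_h\omega_h$ and each $\chi_{(\underline{k},\underline{l})}$ is multiplicative, I would compute
$$
\chi_{(\underline{k},\underline{l})}(f_h)=\prod_{p\in\pp_h}p^{-ik_ph}\prod_{m\in\no}(m+\alpha)^{-il_mh}
=\exp\bigg\{-ih\bigg(\sum_{p\in\pp_h}k_p\log p+\sum_{m\in\no}l_m\log(m+\alpha)\bigg)\bigg\},
$$
which is precisely the quantity already examined in the proof of Lemma~\ref{rk-le-1}. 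Applying $\Phi_h$ to the Fourier expansion and comparing coefficients, by uniqueness of the Fourier coefficients on the compact group $\Omega_h$, yields $\widehat g(\underline{k},\underline{l})=\widehat g(\underline{k},\underline{l})\,\chi_{(\underline{k},\underline{l})}(f_h)$ for every index, so for each $(\underline{k},\underline{l})$ either $\widehat g(\underline{k},\underline{l})=0$ or $\chi_{(\underline{k},\underline{l})}(f_h)=1$.

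The decisive step is the nonvanishing relation \eqref{rk-fo-6}, established in the proof of Lemma~\ref{rk-le-1} under exactly the hypotheses of the present lemma (transcendence of $\alpha$ and rationality of $\exp\{\frac{2\pi}{h}\}$): for every $(\underline{k},\underline{l})\neq(\underline 0,\underline 0)$ one has $\chi_{(\underline{k},\underline{l})}(f_h)\neq 1$. Consequently $\widehat g(\underline{k},\underline{l})=0$ for all nonzero indices, $g$ reduces to its constant term $\widehat g(\underline 0,\underline 0)$ almost everywhere, and $\Phi_h$ is ergodic. I do not expect a genuine obstacle here: the arithmetic heart of the argument, namely the nonvanishing \eqref{rk-fo-6} that guarantees a nonzero denominator in \eqref{rk-fo-8}, has already been carried out in Lemma~\ref{rk-le-1}, so the only remaining care is to set up the character expansion correctly and to match $\chi_{(\underline{k},\underline{l})}(f_h)$ with that earlier computation.
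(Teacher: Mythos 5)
Your proposal is correct and follows essentially the same route as the paper: the paper likewise expands a $\Phi_h$-invariant object in characters of $\Omega_h$ (it uses the indicator function $\ii_A$ of an invariant set rather than a general invariant $g\in L^2$, an equivalent formulation), derives the coefficient identity ${\widehat \ii}_A(\chi)=\chi(f_h){\widehat \ii}_A(\chi)$, and kills all nontrivial coefficients via exactly the nonvanishing relation \eqref{rk-fo-6} from Lemma~\ref{rk-le-1}. No gap; your identification of \eqref{rk-fo-6} as the arithmetic heart of the argument matches the paper's emphasis.
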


In the proof of this lemma, again the conditions for $\alpha$ and $\exp\big\{\frac{2 \pi}{h}\big\}$ play the essential role.    Therefore we give the full details.

\begin{proof}
	Let $\chi$ be a non-trivial character of $\Omega_h$. In the proof of Lemma~\ref{rk-le-1}, we have already known that such characters are given by
	$$
	\chi(\omega_h)=\prod_{p \in \pp_h}\omega_{1h}^{k_p}(p)\prod_{m \in \no}\omega_2^{l_m}(m)
	$$
	with only a finite number of integers $k_p$ and $l_m$ distinct from zero, where
	$\omega_h=(\omega_{1h},\omega_2)$, $\omega_{1h} \in \Omega_{1h}$, $\omega_2 \in \Omega_2$.
	Therefore,
	$$
	\chi(f_h)=\exp\bigg\{-ih\bigg(\sum_{p \in \pp_h}k_p\log p+\sum_{m \in \no}l_m \log(m+\alpha)\bigg)\bigg\}.
	$$
	As in the proof of Lemma~\ref{rk-le-1} (see the equation \eqref{rk-fo-6}), under the assumptions for $\alpha$ and $\exp\{\frac{2 \pi}{h}\}$, we have that
	\begin{equation}\label{rk-fo-11}
	\chi(f_h)\not =1
	\end{equation}
	for  $(\underline{k},\underline{l})\not =(\underline 0, \underline 0)$.
	
	Denote by $\ii_A$ the indicator function of the set $A$, and by ${\widehat \ii}_A(\chi)$  its Fourier transformation. Let  $A \in {\mathcal B}(\Omega_h)$ be an invariant set of the transformation $\Phi_h$. Then we have that
$\ii_{A}(f_h\omega_h)=\ii_A(\omega_h)$
for almost all $\omega_h \in \Omega_h$.
Therefore we obtain that
	\begin{eqnarray*}
		{\widehat \ii}_A(\chi)&=&\int_{\Omega_h}\chi(\omega_h)\ii_A(\omega_h)m_H^h (d \omega_h)
		=\int_{\Omega_h}\chi(f_h\omega_h)\ii_A(f_h\omega_h)m_H^h (d \omega_h)\cr
		&=&\chi(f_h)\int_{\Omega_h}\chi(\omega_h)\ii_A(\omega_h)m_H^h (d \omega_h)= \chi(f_h){\widehat \ii}_A(\chi).
	\end{eqnarray*}
	Hence, in view of \eqref{rk-fo-11}, for the non-trivial character $\chi$, we have
${\widehat \ii}_A(\chi)=0$.
	
	Now let $\chi_0$ be the trivial character of $\Omega_h$, i.e., $\chi_0(\omega_h)=1$ for all $\omega_h \in \Omega_h$. Suppose that ${\widehat \ii}_A(\chi_0)=u$. Taking into account the equalities
	$$
	\int_{\Omega_h}\chi(\omega_h)m_H^h(d \omega_h)=\begin{cases}
	1 &\quad \text{if} \quad \chi=\chi_0,\\
	0 &\quad \text{if} \quad \chi\not = \chi_0,
	\end{cases}
	$$
	and the fact ${\widehat \ii}_A(\chi)=0$ for $\chi\not =\chi_0$, we obtain that, for every character $\chi$ of $\Omega_h$,
	$$
	{\widehat \ii}_A(\chi)=u\int_{\Omega_h}\chi(\omega_h)m_H^h (d \omega_h)={\widehat u}(\chi).
	$$
	Since the function $\ii_A(\omega_h)$ is determined by its Fourier transform ${\widehat \ii}_A(\chi)$, it follows that $\ii_A(\omega_h)=u$ for almost all $\omega_h \in \Omega_h$. Moreover $u=0$ or $u=1$,
	because $\ii_A(\omega_h)$ is defined as the indicator of the set $A$. Thus $\ii_A(\omega_h)=0$ or $\ii_A(\omega_h)=1$ for almost all $\omega_h \in \Omega_h$. From this we find that $m_H^h(A)=0$ or $m_H^h(A)=1$,
	and therefore the transformation $\Phi_h$ is ergodic.
\end{proof}

\begin{proof}[Proof of Lemma~\ref{rk-le-3}]
Since the properties of the function $\vph(s)$ are similar to those of $\vf(s)$, in view of Lemma~\ref{rk-le-5}, the proof of the lemma goes in the same way as in the proof of Lemma~3 in \cite{RK-KM-2017-Pal}.
\end{proof}

On $(\uH,{\mathcal B}(\uH))$, for $A\in\mathcal{B}(\uH)$, define one more probability measure
$\hP_{Nh}$ by
$$
\hP_{Nh}(A)=\frac{1}{N+1}\#
\bigg\{0 \leq k \leq N: \uZ_h(\us+i kh,\omega_h)\in A\bigg\}.
$$

\begin{lemma}\label{rk-le-4}
Suppose that the conditions of Theorem~\ref{rk-th-4} are fulfilled. Then, on $(\uH,{\mathcal B}(\uH))$, there exists a probability measure $P_h$
such that the measures $P_{Nh}$ and $\hP_{Nh}$ both converge weakly to $P_h$ as $N\to\infty$.
\end{lemma}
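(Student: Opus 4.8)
The plan is to deduce Lemma~\ref{rk-le-4} from the $n$-truncated convergence of Lemma~\ref{rk-le-2} and the approximation estimates of Lemma~\ref{rk-le-3} by the standard ``convergence together'' argument, i.e.\ Theorem~4.2 of \cite{PB-1968}. It is convenient to view the measures probabilistically: let $\theta_N$ be uniformly distributed on $\{0,1,\dots,N\}$, so that $P_{Nh}$ is the distribution of $\uZ_h(\us+i\theta_N h)$, $\hP_{Nh}$ that of $\uZ_h(\us+i\theta_N h,\omega_h)$, and $P_{Nh,n}$, $\hP_{Nh,n}$ the distributions of the corresponding truncations. By Lemma~\ref{rk-le-2}, for each fixed $n$ both truncated sequences converge weakly, as $N\to\infty$, to one and the same measure $P_n$ on $(\uH,\mathcal B(\uH))$.

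First I would show that the family $\{P_n:n\in\nn\}$ is tight. Since $\uH=H(D_1)\times H(D_2)$ is a product of spaces of holomorphic functions, Montel's theorem identifies the relatively compact sets as those uniformly bounded on each compact $K_l$ of an exhaustion of $D_1$ (resp.\ $D_2$); tightness therefore reduces to uniform-in-$n$ bounds
$$
\limsup_{N\to\infty}\frac{1}{N+1}\sum_{k=0}^{N}\sup_{s\in K_l}|\vf_{h,n}(s+ikh)|\le R_l,
\qquad
\limsup_{N\to\infty}\frac{1}{N+1}\sum_{k=0}^{N}\sup_{s\in K_l}|\zeta_n(s+ikh,\alpha;\gb)|\le R'_l,
$$
with $R_l,R'_l$ independent of $n$. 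These follow from the Cauchy integral formula together with a discrete mean-value (second moment) estimate for the absolutely convergent Dirichlet series, uniform in $n$ because $0<v_1(k,n)\le1$ and $0<v_2(m,n,\alpha)\le1$; this is the type of computation already performed in \cite{RK-KM-2015} and \cite{RK-KM-2017-Pal}. Passing to the limit $N\to\infty$ gives $\int_{\uH}\sup_{s\in K_l}|g_1(s)|\,dP_n\le R_l$ and likewise for the $\zeta$-component, and then Chebyshev's inequality produces, for every $\delta>0$, a single compact subset of $\uH$ of $P_n$-measure $>1-\delta$ for all $n$. Thus $\{P_n\}$ is tight and, by Prokhorov's theorem, relatively weakly compact.

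Next I would let $n\to\infty$. Choose a subsequence $\{P_{n_r}\}$ converging weakly to some $P_h$. From \eqref{rk-fo-9} and Markov's inequality one gets, for every $\ve>0$,
$$
\lim_{n\to\infty}\limsup_{N\to\infty}\frac{1}{N+1}\#\big\{0\le k\le N:\underline{\rho}\big(\uZ_h(\us+ikh),\uZ_{h,n}(\us+ikh)\big)\ge\ve\big\}=0,
$$
which is exactly hypothesis (iii) of Theorem~4.2 in \cite{PB-1968}. Together with $P_{Nh,n_r}\Rightarrow P_{n_r}$ and $P_{n_r}\Rightarrow P_h$, this theorem yields $P_{Nh}\Rightarrow P_h$. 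Because the limit of the single sequence $\{P_{Nh}\}$ cannot depend on the chosen subsequence $\{n_r\}$, every weakly convergent subsequence of $\{P_n\}$ must have the same limit $P_h$; hence $P_n\Rightarrow P_h$ and $P_{Nh}\Rightarrow P_h$. Finally I would repeat the argument verbatim for $\hP_{Nh}$, using \eqref{rk-fo-10}, which holds for almost all $\omega_h$ thanks to the ergodicity of $\Phi_h$ established in Lemma~\ref{rk-le-5}, in place of \eqref{rk-fo-9}; since $\hP_{Nh,n}$ converges weakly to the same $P_n$, this gives $\hP_{Nh}\Rightarrow P_h$ for almost all $\omega_h$, with the identical limit measure.

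The main obstacle is the tightness of $\{P_n\}$ uniformly in the truncation parameter $n$: one must bound the discrete averages of $\sup_{K_l}|\vf_{h,n}|$ and $\sup_{K_l}|\zeta_n|$ independently of $n$, which is where the discrete second-moment estimate---and, through $\vph$, the control of the finitely many omitted Euler factors---enters. Once this is secured, the passage $n\to\infty$ is the routine convergence-together scheme, and the coincidence of the two limits is automatic from Lemma~\ref{rk-le-2}.
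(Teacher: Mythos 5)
Your proposal is correct and follows essentially the same route as the paper, which proves this lemma by referring to Lemma~4 of \cite{RK-KM-2017-Pal}: that argument is precisely the standard scheme you describe --- tightness of $\{P_n\}$ via discrete moment estimates (uniform in $n$ since $0<v_1,v_2\le 1$), Prokhorov's theorem, the approximation relations \eqref{rk-fo-9} and \eqref{rk-fo-10} combined with Markov's inequality to verify the hypothesis of Theorem~4.2 in \cite{PB-1968}, and the uniqueness-of-limit argument identifying the limits of $P_{Nh}$ and $\hP_{Nh}$ with a single measure $P_h$. Your handling of the $\omega_h$-dependence through the almost-sure validity of \eqref{rk-fo-10} matches the paper's setup as well.
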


\begin{proof}
The lemma can be proved similarly as Lemma~4 in \cite{RK-KM-2017-Pal}.
\end{proof}

\begin{proof}[Proof of Theorem~\ref{rk-th-4}]
Taking into account Lemma~\ref{rk-le-4}, the proof of Theo\-rem~\ref{rk-th-4} will be completed if we can show that the limit measure $P_h$ coincides with the measure $P_{\uZ h}$.    We can do this, using Lemma~\ref{rk-le-5} together with the Birkhoff-Khintchine ergodicity theo\-rem, in a standard way (for the details, see \cite{AL-1996} or \cite{JSt-2007}) .
\end{proof}

\section{Proof of Theorem~\ref{rk-th-3}}\label{sec-3}

The deduction of Theorem \ref{rk-th-3} from Theorem \ref{rk-th-4} is now rather standard.
We first need the explicit form of the support of the measure $P_{\uZ h}$. Recall that the support of $P_{\uZ h}$ is a minimal closed set $S_{\uZ h} \subset \uH$
such that $P_{\uZ h}(S_{\uZ h})=1$. Also we remind the arguments from \cite{RK-KM-2015} (or Section~2 in \cite{RK-KM-2017-Pal}).

Let $\varphi\in\widetilde{S}$ (and so $\varphi_h\in\widetilde{S}$), and $K_1$, $K_2$, $f_1$ and $f_2$ be as in the statement of Theorem~\ref{rk-th-3}.
We can find a real number $\sigma_0$ such that $\sigma^*<\sigma_0<1$ and a positive
number $M>0$, such that $K_1$ is included in the open rectangle
$$
D_M=\{s \in \cc: \;\sigma_0<\sigma<1,\; |t|<M\}.
$$
Since $\varphi(s)\in\st$, its pole is at most at $s=1$ (as for the function $\vf(s)$), then we find that
$D_{\varphi}=\{s \in \cc :\;\sigma>\sigma_0, \;\sigma\neq 1\}$.
Therefore $D_M$ is an open subset of $D_{\varphi}$.
Also we can find $T>0$ such that $K_2$ is included in the open rectangle
$$
D_T=\bigg\{s \in \cc: \;\frac{1}{2}<\sigma<1, \;|t|<T\bigg\}.
$$

Now we choose $D_1=D_M$ and $D_2=D_T$ in Theorem~\ref{rk-th-4}. Denote by $S_{\varphi}$ the set of all functions $f\in H(D_M)$ non-vanishing on $D_M$, or
constantly equivalent to $0$ on $D_M$.

\begin{theorem}\label{rk-th-5}
Suppose that $\alpha$ is transcendental, and, for $h>0$, $\exp\big\{\frac{2 \pi}{h}\big\}$ is rational. The support of the measure $P_{\underline{Z} h}$ is the set
$S_{\uZ h}=S_{\varphi}\times H(D_T)$.
\end{theorem}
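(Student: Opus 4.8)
The plan is to exploit the product structure of $P_{\uZ h}$. The Haar measure on $\Omega_h=\Omega_{1h}\times\Omega_2$ factors as $m^h_H=m_{H1h}\times m_{H2}$, and the first component $\vph(s_1,\omega_{1h})$ of $\uZ_h(\us,\omega_h)$ depends only on $\omega_{1h}\in\Omega_{1h}$ while the second component $\zeta(s_2,\alpha,\omega_2;\gb)$ depends only on $\omega_2\in\Omega_2$. Hence these two $\uH$-coordinates are independent, and the distribution factors as $P_{\uZ h}=P_{\vph}\times P_\zeta$ on $H(D_M)\times H(D_T)$, where $P_{\vph}$ and $P_\zeta$ are the laws of $\vph(s_1,\omega_{1h})$ and $\zeta(s_2,\alpha,\omega_2;\gb)$ respectively. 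Since $H(D_M)$ and $H(D_T)$ are separable metric spaces, the support of a product measure is the product of the supports; so it suffices to prove $\mathrm{supp}\,P_\zeta=H(D_T)$ and $\mathrm{supp}\,P_{\vph}=S_\varphi$.

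For the $\zeta$-factor I would invoke the known support computation for the periodic Hurwitz zeta-function (as in \cite{AL-2006}, \cite{RK-KM-2015}). Transcendence of $\alpha$ makes the set $\{\log(m+\alpha):m\in\no\}$ linearly independent over $\qq$; feeding this into the standard denseness-of-rearrangements (Pecherskii-type) argument shows that the random series can approximate any target in $H(D_T)$, so its support is the whole space $H(D_T)$. Nothing here differs from the already-established continuous case.

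For the $\vph$-factor I would appeal to the support theorem for the Steuding class (see \cite{JSt-2007}). As remarked after the definition of $\vph$, $\vf\in\st$ forces $\vph\in\st$, since the two differ by only finitely many Euler factors; and $D_M\subset D(\sigma^*,1)$ by the choice $\sigma^*<\sigma_0<1$. The Steuding support result then yields exactly that $\mathrm{supp}\,P_{\vph}$ is the set of $f\in H(D_M)$ that are either non-vanishing on $D_M$ or identically $0$, i.e. $\mathrm{supp}\,P_{\vph}=S_\varphi$. Combining the two factors gives $S_{\uZ h}=S_\varphi\times H(D_T)$, as claimed.

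The one place where the passage from $\vf$ to $\vph$ must be checked — and what I expect to be the main (if mild) obstacle — is the $\vph$-factor. Here the twist $\omega_{1h}$ runs only over $\Omega_{1h}=\prod_{p\in\pp_h}\gamma$, so the finitely many Euler factors indexed by $\pp_p$ are absent. I would verify that this does not shrink the support: the set $\pp_h$ is still infinite and $\{\log p:p\in\pp_h\}$ remains linearly independent over $\qq$ (being a subset of $\{\log p:p\in\pp\}$), so both the denseness argument producing arbitrary non-vanishing targets and the logarithmic/Euler-product argument forcing non-vanishing on the support carry over unchanged. Equivalently, one simply reads off $\mathrm{supp}\,P_{\vph}=S_\varphi$ from the membership $\vph\in\st$.
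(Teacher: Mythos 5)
Your proposal is correct and follows essentially the same route as the paper: the paper's one-line proof delegates to the analogue (Theorem~5 of \cite{RK-KM-2017-Pal}), whose argument is precisely your decomposition --- the product Haar measure $m^h_H=m_{H1h}\times m_{H2}$ makes the two coordinates independent, so the support is the product of the marginal supports, with the $\zeta$-factor handled by Laurin\v cikas's result for transcendental $\alpha$ and the $\vph$-factor by the Steuding-class support theorem, using $\vf\in\st\Rightarrow\vph\in\st$. Your closing check that deleting the finitely many primes in $\pp_p$ neither destroys $\vph\in\st$ nor shrinks the support is exactly the point where the passage from $\vf$ to $\vph$ needs verification, and you resolve it correctly.
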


\begin{proof}
This is an analogue of Theorem~5 from \cite{RK-KM-2017-Pal}.
\end{proof}


For completion of the proof  Theorem~\ref{rk-th-3}, we use the following well-known Mergelyan theorem  on the approximation of analytic functions by polynomials (see \cite{SNM-1952}).

\begin{lemma}\label{rk-le-6}
Let $K \subset \cc$ be a compact subset with connected complement, and $f(s)$ be a continuous function on $K$ which is analytic in the interior of $K$. Then, for every $\varepsilon>0$, there exists a polynomial $p(s)$ such that
$$
\sup\limits_{s \in K}|f(s)-p(s)|<\varepsilon.
$$
\end{lemma}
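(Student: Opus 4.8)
This final statement is Mergelyan's classical approximation theorem, which the authors (correctly) invoke by citing \cite{SNM-1952} rather than reproving; the plan below is how I would reconstruct its proof from scratch. The overall strategy splits into two unequal halves. The \emph{easy} half is that, by Runge's theorem, any function holomorphic on an open neighborhood of $K$ can be uniformly approximated on $K$ by rational functions whose poles lie in $\cc \setminus K$; since $\cc \setminus K$ is connected and $K$ is compact, that complement is exactly the single unbounded component, so the poles may be pushed off to infinity and the rational approximants replaced by polynomials. Thus everything reduces to the \emph{hard} half: approximating the given $f$ --- merely continuous on $K$ and holomorphic only in the interior --- uniformly on $K$ by functions holomorphic on a full neighborhood of $K$.

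For that reduction I would first extend $f$ to a function $\Phi \in C_c(\cc)$ by the Tietze extension theorem, recording its modulus of continuity $\omega$. Mollifying with a smooth radial bump $A_\delta$ supported in the disk of radius $\delta$ produces $\Phi_\delta = \Phi * A_\delta \in C_c^\infty(\cc)$ satisfying $\|\Phi_\delta - \Phi\|_\infty \leq \omega(\delta)$ and $|\partial\Phi_\delta/\partial\bar z| = O(\omega(\delta)/\delta)$. The crucial point is that $\partial\Phi_\delta/\partial\bar z$ vanishes at every $z$ whose $\delta$-neighborhood lies inside the interior of $K$, because there $\Phi = f$ is holomorphic and $\partial\Phi/\partial\bar z = 0$; hence $\partial\Phi_\delta/\partial\bar z$ is supported in a thin strip about $\partial K$ together with the complement of $K$. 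The Cauchy--Pompeiu (solid Cauchy) formula then represents
\[
\Phi_\delta(z) = -\frac{1}{\pi}\iint_{\cc}\frac{(\partial\Phi_\delta/\partial\bar\zeta)(\zeta)}{\zeta - z}\,dm(\zeta),
\]
with $dm$ planar Lebesgue measure, so $f$ is, up to $O(\omega(\delta))$, a superposition of Cauchy kernels $1/(\zeta - z)$ weighted by a small density supported near $\partial K$.

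The heart of the argument, and the step I expect to be the main obstacle, is to replace each Cauchy kernel $1/(\zeta - z)$ by a substitute $g_\zeta(z)$ that is holomorphic in $z$ on a neighborhood of $K$ (its singularities pushed into $\cc \setminus K$) while still approximating $1/(\zeta - z)$ well enough, with estimates uniform in the weight, so that
\[
F(z) = -\frac{1}{\pi}\iint_{\cc}\Big(\frac{\partial\Phi_\delta}{\partial\bar\zeta}\Big)(\zeta)\,g_\zeta(z)\,dm(\zeta)
\]
is holomorphic near $K$ and within $\varepsilon$ of $f$ on $K$. Building $g_\zeta$ is delicate: for $\zeta$ close to $\partial K$ one must exploit the connectedness of the complement to move the pole of $1/(\zeta-z)$ into the unbounded component along a controlled path and bound the resulting error by the modulus of continuity of $f$, and it is precisely here that all the geometric and quantitative difficulty of Mergelyan's theorem resides. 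Once $F$ is produced, the easy half applies: $F$ is holomorphic on a neighborhood of $K$ with connected complement, so Runge followed by pushing poles to infinity yields a polynomial $p$ with $\sup_{s\in K}|F(s) - p(s)| < \varepsilon$, and combining the estimates makes $\sup_{s\in K}|f(s)-p(s)|$ arbitrarily small.
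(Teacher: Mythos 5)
The paper offers no proof of this lemma: it is Mergelyan's classical theorem, quoted verbatim with the citation \cite{SNM-1952}, so your decision to invoke that citation and treat the statement as known matches the paper's treatment exactly. Your reconstruction is the standard (Rudin-style) argument --- Tietze extension, mollification with the $\omega(\delta)/\delta$ bound on $\partial\Phi_\delta/\partial\bar z$, the Cauchy--Pompeiu representation, substitute kernels, then Runge with poles pushed to infinity through the connected complement --- and you correctly isolate the one genuinely hard step, the construction of the substitute kernels $g_\zeta$ (done in the textbook proof via the Riemann map of the complement of a connected compact subset of $\cc\setminus K$ of diameter comparable to $\delta$, together with Koebe's one-quarter theorem, yielding the bounds $|g_\zeta|=O(1/\delta)$ and $|g_\zeta(z)-(z-\zeta)^{-1}|=O(\delta^2|z-\zeta|^{-3})$ rather than a bound by the modulus of continuity of $f$, which enters only through the weight $\partial\Phi_\delta/\partial\bar\zeta$), which your outline names but does not carry out.
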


\begin{proof}[Proof of Theorem~\ref{rk-th-3}]
This is analogous to Section~4 in  \cite{RK-KM-2017-Pal}.
Since the function $f_1(s) \not = 0$ on $K_1$, by Lemma~\ref{rk-le-6}, there exists the polynomials $p_1(s)$ and $p_2(s)$ such that
\begin{equation}\label{rk-fo-14}
\sup\limits_{s \in K_1}\big|f_1(s)-e^{p_1(s)}\big|<\frac{\ve}{2} \quad \text{and} \quad
\sup\limits_{s \in K_2}\big|f_2(s)-p_2(s)\big|<\frac{\varepsilon}{2}.
\end{equation}

Next we define the set
$$
G=\bigg\{
(g_1,g_2)\in \uH: \sup\limits_{s\in K_1}|g_1(s)-e^{p_1(s)}|<\frac{\varepsilon}{2}, \
\sup\limits_{s\in K_2}|g_2(s)-p_2(s)|<\frac{\varepsilon}{2}
\bigg\},
$$
which is an open subset of the space $\uH$, and by Theorem~\ref{rk-th-5}, it is an open neighbourhood of the element $(e^{p_1(s)},p_2(s))$ of the support of $P_{\uZ h}$.
Therefore $P_{\uZ h}(G)>0$. Moreover, Theorem~\ref{rk-th-4} and an equivalent statement of the weak convergence in terms of open sets (see \cite{PB-1968}) together with the
definitions of $P_{N h}$ and $G$ show that
\begin{eqnarray*}\label{rk-fo-15}
\liminf\limits_{N \to \infty}\frac{1}{N+1}\#
\bigg\{
0 \leq k \leq N:  \uZ_h(\us+ikh) \in G
\bigg\} \geq P_{\uZ h}(G)>0
\end{eqnarray*}
or
\begin{eqnarray*}\label{rk-fo-16}
\liminf\limits_{N \to \infty}\frac{1}{N+1}\#
\bigg\{
0 \leq k \leq N: && \sup\limits_{s\in K_1}\big|\varphi_h(s+ikh)-e^{p_1(s)}\big|<\frac{\varepsilon}{2},\cr
&& \sup\limits_{s\in K_2}\big|\zeta(s+ikh,\alpha;\gb)-p_2(s)\big|<\frac{\varepsilon}{2}
\bigg\}>0.
\end{eqnarray*}
From the last inequality and \eqref{rk-fo-14}
we obtain the assertion of Theorem~\ref{rk-th-3}.
\end{proof}

\section{Proof of Theorem~\ref{rk-th-2-1}}\label{sec-4}

For the proof of Theorem~\ref{rk-th-2-1}, we adopt the well-known probabilistic method (see, for example, \cite{RK-KM-2017-BAMS}, \cite{AL-DS-2012}), based on the joint limit theorems for probability measures in the space of holomorphic functions.

Let $\varphi\in\mathcal{M}$, $D_1\subset D_{\varphi}$, $D_2\subset D_{\zeta}$, and
$$
\uH_r=H(D_1)\times \underbrace{H(D_2)\times ...\times H(D_2)}\limits_{r}.
$$
Moreover, define the torus
$
\uO_r=\Omega_1\times \Omega_{2 1}\times ... \times \Omega_{2 r}
$
where
$\Omega_{2j}=\Omega_2$ for all $j=1,...,r$.  Then $\uO_r$ is a compact topological Abelian group, and we obtain a new probability space $(\uO_r,{\mathcal B}(\uO_r), m_{Hr})$ with the Haar measure $m_{Hr}$.

Suppose that $\uom_r:=(\omega_1,\omega_{2 1},...\omega_{2 r})$ is an element of $\uO_r$, and, for brevity, let
$\us_r=(s_1,s_{21},...,s_{2r}) \in \cc^{r+1}$,  $\uA=(\alpha_1,...,\alpha_r)$ and $\ugb=(\gb_1,...,\gb_r)$, where $\gb_j=\{b_{mj}:m\in\mathbb{N}_0\}$, $j=1,...,r$,  is a periodic sequence of complex numbers with the minimal period $k_j$.  By $\zeta(s,\alpha_j;\gb_j)$ we denote the corresponding periodic Hurwitz zeta-function, $j=1,...,r$.
Now, on the probability space $(\uO_r,{\mathcal B}(\uO_r), m_{Hr})$, define an $\uH_r$-valued random element $\uZ_r(\us_r,\uA,\uom_r;\ugb)$ by the formula
$$
\uZ_r(\us_r,\uA,\uom_r;\ugb)=\big(\varphi(s_1,\omega_1), \zeta(s_{2 1},\alpha_1,\omega_{2 1};\gb_1),..., \zeta(s_{2 r},\alpha_r,\omega_{2 r};\gb_r)\big).
$$
Here
\begin{equation}\label{rk-fo-17}
\varphi(s_1,\omega_1)=\sum_{k=1}^{\infty}\frac{c_k \omega_1(k)}{k^{s_1}}, \quad s_1 \in D_1,
\end{equation}
and
\begin{equation}\label{rk-fo-18}
\zeta(s_{2j},\alpha_j,\omega_{2j};\gb_j)=\sum_{m=0}^{\infty}
\frac{b_{mj}\omega_{2j}(m)}{(m+\alpha_j)^{s_{2j}}}, \quad s_{2j}\in D_2, \quad j=1,...r.
\end{equation}
Denote by $P_{\uZ r}$ the distribution of the random element  $\uZ_r(\us_r,\uA,\uom_r;\ugb)$ defined by
$$
P_{\uZ r}(A):=m_{Hr}\{\uom_r\in\uO_r : \;  \uZ_r(\us_r,\uA,\uom_r;\ugb) \in A\},\quad A \in {\mathcal B}(\uH_r).
$$

Now, on $(\uH_r,{\mathcal B}(\uH_r))$, we define a probability measure $P_{Nr}$ by
$$
P_{Nr}(A)=\frac{1}{N+1}\#\big\{0 \leq k \leq N: \;
\uZ_r(\us_r+ikh)
\in A\big\}, \quad A\in\mathcal{B}(\underline{H}),
$$
where $\us_r+ikh=(s_1+ikh,s_{21}+ikh,...,s_{2r}+ikh)$ and
$$
\uZ_r(\us_r)= (\varphi(s_1),\zeta(s_{21},\alpha_1;\gb_1),...,\zeta(s_{2r},\alpha_r;\gb_r)).
$$

Then we have following mixed discrete joint limit theorem.

\begin{theorem}\label{rk-th-7}
Suppose $\varphi\in\mathcal{M}$.   If the elements of the set $L(\pp,\uA,h)$ are linearly independent over $\qq$, then  $P_{Nr}$ converges weakly to $P_{\uZ r}$ as $N \to \infty$.
\end{theorem}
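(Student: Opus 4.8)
The plan is to establish Theorem~\ref{rk-th-7} by reproducing, in the $r$-fold setting, the chain of reductions used for Theorem~\ref{rk-th-4} and for the single periodic Hurwitz case in \cite{RK-KM-2017-Pal}. Throughout, the arithmetic hypothesis will enter only through the linear independence over $\qq$ of $L(\pp,\uA,h)$, and this single input will play the role that the transcendence of $\alpha$ together with the rationality of $\exp\{2\pi/h\}$ played earlier. First I would prove the basic limit theorem on the group $\uO_r$: define
$$
Q_{Nr}(A)=\frac{1}{N+1}\#\Big\{0\leq k\leq N:\big((p^{-ikh}:p\in\pp),((m+\alpha_1)^{-ikh}:m\in\no),\dots,((m+\alpha_r)^{-ikh}:m\in\no)\big)\in A\Big\}
$$
for $A\in\mathcal{B}(\uO_r)$, and show that $Q_{Nr}$ converges weakly to the Haar measure $m_{Hr}$.

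For this step I would use the Fourier transform method as in Lemma~\ref{rk-le-1}. A character of $\uO_r$ has the form $\chi(\uom_r)=\prod_{p\in\pp}\omega_1^{k_p}(p)\prod_{j=1}^r\prod_{m\in\no}\omega_{2j}^{l_{mj}}(m)$ with only finitely many exponents nonzero, so the Fourier transform of $Q_{Nr}$ at $\chi$ is
$$
\frac{1}{N+1}\sum_{k=0}^N\exp\Big\{-ikh\Big(\sum_{p\in\pp}k_p\log p+\sum_{j=1}^r\sum_{m\in\no}l_{mj}\log(m+\alpha_j)\Big)\Big\}.
$$
For a nontrivial $\chi$ this geometric sum is bounded as soon as the ratio
$$
\exp\Big\{-ih\Big(\sum_{p\in\pp}k_p\log p+\sum_{j=1}^r\sum_{m\in\no}l_{mj}\log(m+\alpha_j)\Big)\Big\}\neq1,
$$
and the ratio equals $1$ only if $\sum_p k_p\log p+\sum_{j,m}l_{mj}\log(m+\alpha_j)=2u(\pi/h)$ for some $u\in\zz$. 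Since this is a nontrivial $\qq$-relation among the elements $\log p$, $\log(m+\alpha_j)$ and $\pi/h$ of $L(\pp,\uA,h)$ (the coefficient of $\pi/h$ being $-2u$), it is excluded by hypothesis. Hence the averaged sum tends to $0$ for every nontrivial $\chi$ and to $1$ for the trivial one, and the continuity theorem for Fourier transforms on compact abelian groups (\cite{HH-1977}) yields $Q_{Nr}\Rightarrow m_{Hr}$.

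From here the argument is structurally identical to Section~\ref{sec-2}. Introducing the absolutely convergent approximations $\varphi_n$ and $\zeta_n(\cdot,\alpha_j;\gb_j)$ obtained by inserting the smoothing weights $v_1$ and $v_2$, I would deduce, exactly as in Lemma~\ref{rk-le-2}, that the truncated measures $P_{Nr,n}$ and $\widehat P_{Nr,n}$ both converge weakly to one and the same measure $P_{nr}$; this is the continuous-mapping consequence of $Q_{Nr}\Rightarrow m_{Hr}$ together with Theorem~5.1 of \cite{PB-1968}. Next, the passage from the truncations back to $\uZ_r(\us_r)$ and $\uZ_r(\us_r,\uA,\uom_r;\ugb)$ requires the $r$-fold analogue of Lemma~\ref{rk-le-3}, i.e.\ that the $\uH_r$-metric distances between the full and truncated objects are negligible in mean. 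The essential ingredient is the ergodicity of the shift $\Phi_r(\uom_r)=f_r\uom_r$ on $\uO_r$, where $f_r=\big((p^{-ih}:p\in\pp),((m+\alpha_1)^{-ih}:m\in\no),\dots,((m+\alpha_r)^{-ih}:m\in\no)\big)$; this is proved by the same Fourier computation as in Lemma~\ref{rk-le-5}, the inequality $\chi(f_r)\neq1$ for nontrivial $\chi$ being precisely the statement verified above. The Birkhoff--Khintchine theorem then yields the required mean approximation, and, arguing as in Lemma~\ref{rk-le-4}, both $P_{Nr}$ and $\widehat P_{Nr}$ converge weakly to a common limit $P_r$. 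Applying the ergodicity of $\Phi_r$ once more identifies $P_r$ with $P_{\uZ r}$, completing the proof.

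The main obstacle is the first step: isolating the exact form of the forbidden relation and checking that linear independence over $\qq$ of the \emph{whole} set $L(\pp,\uA,h)$ simultaneously rules out every cross-relation among the primes and the shifts $\{\log(m+\alpha_j)\}$ arising from the different moduli $\alpha_1,\dots,\alpha_r$, with the factor $\pi/h$ absorbing the $2\pi$-periodicity of the exponential. Once this is secured, both the torus limit theorem and the ergodicity of $\Phi_r$ follow from the very same computation, and the remaining reductions are routine adaptations of the one-function case in \cite{RK-KM-2017-Pal} and of Theorem~\ref{rk-th-4}.
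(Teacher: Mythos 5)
Your proposal is correct and follows essentially the same route as the paper: the paper likewise reduces Theorem~\ref{rk-th-7} to the discrete limit theorem on the torus $\uO_r$ (Lemma~\ref{rk-le-7}), proved by precisely your Fourier-transform computation in which a unit ratio would force $\sum_{p}k_p\log p+\sum_{j,m}l_{mj}\log(m+\alpha_j)=-2a\cdot\frac{\pi}{h}$, a nontrivial rational relation among elements of $L(\pp,\uA,h)$ excluded by hypothesis, and then it declares the remaining steps (absolutely convergent approximations, mean approximation via ergodicity, identification of the limit measure) to be the standard chain from \cite{RK-KM-2017-BAMS} and \cite{RK-KM-2017-Pal}, exactly as you outline. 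You even correctly handle the detail that the set $L(\pp,\uA,h)$ contains $\frac{\pi}{h}$ rather than $\frac{2\pi}{h}$, absorbing the $2\pi$-periodicity with the even coefficient $-2a$.
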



The proof of this theorem goes along the way analogous to that of Lemma~5.1 in \cite{RK-KM-2017-BAMS}: first we prove a discrete joint limit theorem on the torus $\uO_r$ (Lemma \ref{rk-le-7} below), then, after showing a discrete joint limit theorem for absolutely convergent series, we approximate in the mean, and finally, using ergodic theory, we obtain an explicit form of the measure $P_{Nr}$, i.e., we show that it converges weakly to the distribution of the random element $\uZ_r(\us_r,\uA,\uom_r;\ugb)$.
As in the proof of Theorem~\ref{rk-th-4}, the main point of the proof of Theorem \ref{rk-th-7} is the first step (Lemma \ref{rk-le-7}).    Therefore we only describe the proof of this
lemma, and omit the other part of the proof of Theorem \ref{rk-th-7}.

Define, for $A \in {\mathcal B}(\uO_r)$,
$$
Q_{Nr}(A):=\frac{1}{N+1}\# \bigg\{0 \leq k \leq N: \bigg(\big(p^{-ikh}: p \in \pp\big), \big((m+\alpha_j)^{-ikh}: m \in \no, j=1,...,r\big)\bigg)\in A\bigg\}.
$$

\begin{lemma}\label{rk-le-7}
Suppose that the elements of the set $L(\pp,\uA,h)$ are linearly independent over $\qq$. Then $Q_{Nr}$ converges weakly to the Haar measure $m_{Hr}$ as $N \to \infty$.
\end{lemma}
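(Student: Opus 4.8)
The plan is to reproduce, \emph{mutatis mutandis}, the Fourier-transform computation already carried out for Lemma~\ref{rk-le-1}, now on the larger torus $\uO_r=\Omega_1\times\Omega_{21}\times\cdots\times\Omega_{2r}$. First I would record the dual group: since every factor of $\uO_r$ is a product of copies of the unit circle $\gamma$, the characters of $\uO_r$ are parametrized by tuples $(\underline k,\underline l)$ with $\underline k=(k_p:p\in\pp)$ and $\underline l=(l_{mj}:m\in\no,\,1\le j\le r)$, where only finitely many of the integers $k_p,l_{mj}$ are nonzero, and a character acts on $\uom_r=(\omega_1,\omega_{21},\ldots,\omega_{2r})$ by
$$
\chi_{(\underline k,\underline l)}(\uom_r)=\prod_{p\in\pp}\omega_1^{k_p}(p)\prod_{j=1}^{r}\prod_{m\in\no}\omega_{2j}^{l_{mj}}(m).
$$

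Next I would compute the Fourier transform $g_{Nr}(\underline k,\underline l)$ of $Q_{Nr}$. Writing $\Lambda:=\sum_{p\in\pp}k_p\log p+\sum_{j=1}^{r}\sum_{m\in\no}l_{mj}\log(m+\alpha_j)$ for brevity, substituting the definition of $Q_{Nr}$ and evaluating each coordinate on the point $\big((p^{-ikh}),((m+\alpha_j)^{-ikh})\big)$ gives
$$
g_{Nr}(\underline k,\underline l)=\frac{1}{N+1}\sum_{k=0}^{N}\exp\{-ikh\Lambda\},
$$
which equals $1$ when $(\underline k,\underline l)=(\underline 0,\underline 0)$.

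The key step, and the only place where the hypothesis enters, is to show that $\exp\{-ih\Lambda\}\neq1$ whenever $(\underline k,\underline l)\neq(\underline 0,\underline 0)$. Were this false, we would have $-ih\Lambda=2\pi i n$ for some $n\in\zz$, hence $\Lambda=-2\pi n/h$, that is,
$$
\sum_{p\in\pp}k_p\log p+\sum_{j=1}^{r}\sum_{m\in\no}l_{mj}\log(m+\alpha_j)+2n\cdot\frac{\pi}{h}=0,
$$
a nontrivial rational linear relation among the elements of $L(\pp,\uA,h)$ (note that $\pi/h$ is itself a member of this set, here occurring with integer coefficient $2n$). Since by assumption the elements of $L(\pp,\uA,h)$ are linearly independent over $\qq$, all coefficients must vanish; in particular every $k_p=0$ and every $l_{mj}=0$, contradicting $(\underline k,\underline l)\neq(\underline 0,\underline 0)$.

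Granting this, summing the geometric progression yields
$$
g_{Nr}(\underline k,\underline l)=\frac{1}{N+1}\cdot\frac{1-\exp\{-i(N+1)h\Lambda\}}{1-\exp\{-ih\Lambda\}}\to 0
$$
as $N\to\infty$ for $(\underline k,\underline l)\neq(\underline 0,\underline 0)$, while $g_{Nr}(\underline 0,\underline 0)=1$. Thus the limiting Fourier transform is the indicator of the trivial character, which is precisely the Fourier transform of the Haar measure $m_{Hr}$, and the continuity theorem for probability measures on compact abelian groups (see \cite{HH-1977}) gives the weak convergence $Q_{Nr}\to m_{Hr}$. I expect the nonvanishing step to be the crux; here, however—unlike in Lemma~\ref{rk-le-1}, where the transcendence of $\alpha$, the rationality of $\exp\{2\pi/h\}$, and the construction of $\pp_h$ had to be combined—it follows at once, since $\pi/h$ is built into the very set whose $\qq$-linear independence is hypothesized, and the remaining computation is entirely routine.
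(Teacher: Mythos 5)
Your proof is correct and takes essentially the same route as the paper's: the identical Fourier-transform computation, the same nonvanishing step deriving a $\qq$-linear relation among elements of $L(\pp,\uA,h)$ (your explicit observation that $\pi/h$ enters with integer coefficient $2n$ is precisely what the paper's reduction to $-2\pi a/h$ uses implicitly), and the same appeal to the continuity theorem for probability measures on compact groups. There are no gaps.
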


\begin{proof}
As in the proof of Lemma~\ref{rk-le-1}, here we use the Fourier transform method.
The Fourier transform of the measure $Q_{Nr}$ is given by
$$
g_{Nr}(\underline{k},\underline{l}_1,...,\underline{l}_r)=\int_{\uO_r}\bigg(\prod_{p \in \pp}\omega_1^{k_p}(p)\prod_{j=1}^{r}\prod_{m \in \no}\omega_{2j}^{l_{mj}}(m)\bigg)\d Q_{Nr},
$$
where only a finite number of integers $k_p$ and $l_{mj}$ are distinct from zero.
Thus, from the definition of $Q_{Nr}$, we have
\begin{eqnarray}\label{rk-fo-19-0}
&& g_{Nr}(\underline{k},\underline{l}_1,...,\underline{l}_r)\cr
&&\quad =
\frac{1}{N+1}\sum_{k=0}^{N}\prod_{p \in \pp}p^{-ikk_ph}\prod_{j=1}^{r}\prod_{m \in \no}(m+\alpha_j)^{-ikl_{mj}h}\cr
&&\quad =
\frac{1}{N+1}\sum_{k=0}^{N}
\exp\bigg\{-ikh\bigg(\sum_{p \in \pp}k_p\log p+\sum_{j=1}^{r}\sum_{m \in \no}l_{mj} \log(m+\alpha_j)\bigg)\bigg\}.
\end{eqnarray}
By the assumption of the lemma, the elements of the set $L(\pp,\uA,h)$ are linearly independent over $\qq$. Then
$$
\sum_{p \in \pp}k_p\log p+\sum_{j=1}^{r}\sum_{m \in \no}l_{mj} \log(m+\alpha_j)=0
$$
if  $\underline{k}=\underline l_1=...=\underline l_r=\underline{0}$, and in this case
 $g_{Nr}(\underline{k},\underline{l}_1,...,\underline{l}_r)=1$.

On the other hand, for $(\underline{k},\underline l_1,...,\underline l_r)$ with at least one non-zero vector $\underline k$, $\underline l_r$, $j=1,...,r$,
\begin{equation}\label{rk-fo-19-2}
\exp\bigg\{-ih\bigg(\sum_{p \in \pp}k_p\log p+\sum_{j=1}^{r}\sum_{m \in \no}l_{mj}
 \log(m+\alpha_j)\bigg)\bigg\}\not =1.
\end{equation}
In fact, if \eqref{rk-fo-19-2} is false, then, for  some integer $a\not= 0$,
\begin{equation}\label{rk-fo-19-3}
\exp\bigg\{-ih
\bigg(
\sum_{p \in \pp}k_p\log p+\sum_{j=1}^{r}  \sum_{m \in \no} l_{mj} \log(m+\alpha_j)
\bigg)\bigg\}
 =\exp\{2 \pi i a\},
\end{equation}
and
$$
\sum_{p \in \pp}k_p\log p+\sum_{j=1}^{r}  \sum_{m \in \no} l_{mj} \log(m+\alpha_j)=
-\frac{2 \pi a}{h},
$$
but this contradicts to the linear independence of the elements of the set $L(\pp,\uA,h)$.
Therefore we now obtain
$$
\lim\limits_{N \to \infty} g_{Nr}(\underline{k},\underline{l}_1,...,\underline{l}_r)=
\begin{cases}
1, &\text{if} \quad (\underline{k},\underline{l}_,...,\underline{l}_r)=(\underline{0},\underline{0},...,\underline{0}),\cr
0, &\text{otherwise}.
\end{cases}
$$
This together with the continuity theorem for probability measures on compact groups gives the statement of the lemma.
\end{proof}

Now assume $\varphi\in\widetilde{S}$, and $K_1$, $K_{2j}$, $f_1(s)$, $f_{2j}(s)$, $j=1,...,r$, are as in the statement of Theorem~\ref{rk-th-2-1}.
We choose $D_1=D_M$ and $D_2=D_T$, where $K_1\subset D_M$ (as in Section \ref{sec-3})
and $K_{2j}\subset D_T$ for all $j=1,\ldots,r$.
As a special case of Lemma~5.2 from \cite{RK-KM-2017-BAMS} (when all $l(j)=1$, $j=1,...,r$, i.e., $\lambda=r$), we have

\begin{lemma}\label{rk-le-8}
The support of the measure $P_{\uZ r}$ is the set $S_\vf \times H(D_T)^r$.
\end{lemma}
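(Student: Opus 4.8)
The plan is to exploit the product structure of the underlying probability space and reduce the computation of $\mathrm{supp}\,P_{\uZ r}$ to the supports of the single coordinates. Since $\uO_r=\Omega_1\times\Omega_{21}\times\cdots\times\Omega_{2r}$ is a direct product of compact Abelian groups, its Haar measure $m_{Hr}$ factors as the product of the Haar measures on $\Omega_1$ and on each $\Omega_{2j}$; consequently the coordinate projections $\omega_1,\omega_{21},\ldots,\omega_{2r}$ are mutually independent. By \eqref{rk-fo-17} and \eqref{rk-fo-18}, the component $\varphi(s_1,\omega_1)$ depends only on $\omega_1$ and each $\zeta(s_{2j},\alpha_j,\omega_{2j};\gb_j)$ depends only on $\omega_{2j}$, so the $r+1$ components of $\uZ_r(\us_r,\uA,\uom_r;\ugb)$ are mutually independent, taking values in $H(D_M)$ and $H(D_T)$ respectively. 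Hence $P_{\uZ r}$ is the product of the distributions of the single components, and on the Polish spaces $H(D_M)$, $H(D_T)$ the support of the distribution of a vector of independent random elements equals the Cartesian product of the supports of the coordinates. It therefore suffices to determine the support of each component separately.

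For the first coordinate I would argue exactly as in the proof of Theorem~\ref{rk-th-5} (the two-factor case): using the Euler product \eqref{rk-fo-5-1} and the independence of the rotations $\omega_1(p)$, $p\in\pp$, together with the Steuding-class condition (e), a denseness argument shows that the attainable limit functions are dense among the zero-free elements of $H(D_M)$; since each partial Euler product is zero-free on $D_M$, an application of Hurwitz's theorem forces every limit to be either zero-free or identically zero, whence the support of the distribution of $\varphi(s_1,\omega_1)$ is exactly $S_{\varphi}$. For each remaining coordinate, the periodic Hurwitz zeta-function carries no Euler product, and because $\alpha_j$ is transcendental the numbers $\{\log(m+\alpha_j):m\in\no\}$ are linearly independent over $\qq$ (a nontrivial rational relation, after exponentiating and clearing denominators, would give an integer polynomial equation satisfied by $\alpha_j$). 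The support theorem for such random Dirichlet series (see \cite{AL-2006}) then shows that the distribution of $\zeta(s_{2j},\alpha_j,\omega_{2j};\gb_j)$ has full support $H(D_T)$.

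Combining these identifications with the product structure yields $S_{\uZ r}=S_{\varphi}\times H(D_T)\times\cdots\times H(D_T)=S_{\varphi}\times H(D_T)^r$, as asserted. The only genuinely delicate ingredient is the identification of the first-coordinate support with $S_{\varphi}$, which requires both the denseness of the attainable zero-free functions (resting on the Euler product and condition (e)) and the exclusion of functions possessing zeros via Hurwitz's theorem; the periodic-Hurwitz coordinates and the passage from the individual supports to their product are routine, the latter being immediate from the independence established above. Since the first-coordinate computation is identical to the single-function case already treated in Theorem~\ref{rk-th-5}, and the $r$ Hurwitz factors are handled one at a time, the statement is precisely the special case $l(j)=1$ for all $j$ (so $\lambda=r$) of Lemma~5.2 in \cite{RK-KM-2017-BAMS}.
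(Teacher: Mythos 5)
Your proposal is correct and matches the paper's treatment: the paper disposes of this lemma by citing Lemma~5.2 of \cite{RK-KM-2017-BAMS} in the special case $l(j)=1$ for all $j$ (so $\lambda=r$), and the argument behind that citation is exactly what you reconstruct --- the Haar measure on $\uO_r$ factors, so $P_{\uZ r}$ is a product of the distributions of the $r+1$ independent coordinates, the support of a product measure on separable metric spaces such as $H(D_M)$, $H(D_T)$ is the product of the supports, the Euler-product coordinate has support $S_\vf$ by the denseness-plus-Hurwitz argument, and each periodic Hurwitz coordinate has full support $H(D_T)$. One small correction: in the setting of Theorem~\ref{rk-th-2-1} the numbers $\alpha_j$ are \emph{not} assumed transcendental, so you should not derive the linear independence of $\{\log(m+\alpha_j):m\in\no\}$ from transcendence; instead it is contained directly in the standing hypothesis that the elements of $L(\pp,\uA,h)$ are linearly independent over $\qq$ (transcendence is only assumed in Theorems~\ref{rk-th-3} and \ref{rk-th-3-1}).
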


Next, using Mergelyan's theorem (Lemma~\ref{rk-le-6}), we can find polynomials $p_1(s)$ and $p_{2j}(s)$, $j=1,...,r$,  which approximate $\log f_1(s)$ and $f_{2j}(s)$, $j=1,...,r$, respectively.
Moreover
from Theorem~\ref{rk-th-7} and Lemma~\ref{rk-le-8}, we have that
\begin{align*}\label{rk-fo-19}
		\liminf\limits_{N \to \infty}
		\frac{1}{N+1}
		\#
		\bigg\{0\leq k \leq N: \
		& \sup\limits_{s \in K_1}|\varphi(s+ikh)-e^{p_1(s)}|<\varepsilon, \cr &  \sup\limits_{1\leq j \leq r}\sup\limits_{s\in K_{2j}}|\zeta(s+ikh,\alpha_j;\gb_j)-p_{2j}(s)|<\varepsilon\bigg\}>0.
\end{align*}
This completes the proof of Theorem \ref{rk-th-2-1}.

\section{Proof of Theorem~\ref{rk-th-3-1}}\label{sec-5}

Theorem~\ref{rk-th-3-1} can be shown in a  way similar to Theorem~\ref{rk-th-2-1}. Therefore as in the preceding section we only explain the points at which conditions of Theorem~\ref{rk-th-3-1} play an essential role.


First, on $(\uH_r,{\mathcal B}(\uH_r))$, we consider
$$
P_{Nhr}(A)=\frac{1}{N+1}\#\big\{0 \leq k \leq N: \;
\uZ_{hr}(\us_r+ikh)
\in A\big\}, \quad A\in\mathcal{B}(\uH_r),
$$
where $\us_r+ikh$ is as in Section~\ref{sec-4} and
$$
\uZ_{hr}(\us_r)= (\varphi_h(s_1),\zeta(s_{21},\alpha_1;\gb_1),...,\zeta(s_{2r},\alpha_r;\gb_r)).
$$

The torus
$
\uO_{hr}=\Omega_{1h}\times \Omega_{2 1}\times ... \times \Omega_{2 r}
$
is a compact topological Abelian group, which leads to the new probability space $(\uO_{hr},{\mathcal B}(\uO_{hr}), m_{Hr}^h)$ with the Haar measure $m_{Hr}^h$.
The ele\-ments  $\omega_{1h}(p)$ and $\omega_{2j}(m)$, $j=1,...,r$, are of the same meaning as in Sections~\ref{sec-2} and \ref{sec-4}, respectively.

Let  $\uom_{hr}:=(\omega_{1h},\omega_{2 1},...\omega_{2 r})$ be an element of $\uO_{hr}$, and, on $(\uO_{hr},{\mathcal B}(\uO_{hr}), m_{Hr}^h)$,  define an $\uH_{r}$-valued random element $\uZ_{hr}(\us_r,\uA,\uom_{hr};\ugb)$ by the formula
$$
\uZ_{hr}(\us_r,\uA,\uom_{hr};\ugb):=\big(\varphi_h(s_1,\omega_{1h}), \zeta(s_{2 1},\alpha_1,\omega_{2 1};\gb_1),..., \zeta(s_{2 r},\alpha_r,\omega_{2 r};\gb_r)\big),
$$
where $\varphi_h(s_1,\omega_{1h})$ and $\zeta(s_{2j},\alpha_j,\omega_{2j};\gb_j)$, $j=1,...,r$, are defined by \eqref{rk-fo-5-1} and \eqref{rk-fo-18}, respectively. Let $P_{\uZ hr}$ denote its distribution, i.e., $P_{\uZ hr}$ is the probability measure on $(\uH_r,{\mathcal B}(\uH_r))$ defined by
$$
P_{\uZ hr}(A):=m_{Hr}^h\big\{\uom_{hr}\in\uO_{hr} : \;  \uZ_{hr}(\us_r,\uA,\uom_{hr};\ugb) \in A\big\},\quad A \in {\mathcal B}(\uH_r).
$$

Then, as a generalization of Theorem \ref{rk-th-4}, we can show the following mixed discrete joint limit theorem in the sense of weakly convergent probability measures in the space of holomorphic functions.

\begin{theorem}\label{rk-th-10}
Let $\varphi\in\mathcal{M}$ (and so $\varphi_h\in\mathcal{M}$).
Suppose that, for $h>0$, $\exp\big\{\frac{2 \pi}{h}\big\}$ is a rational number. Let the numbers $\alpha_1,...,\alpha_r$ be algebraically independent over $\qq$. Then  $P_{Nhr}$ converges weakly to $P_{\uZ h r}$ as $N \to \infty$.
\end{theorem}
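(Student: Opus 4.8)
The plan is to follow the architecture of the proof of Theorem~\ref{rk-th-4}, now carried out on the larger torus $\uO_{hr}=\Omega_{1h}\times\Omega_{21}\times\cdots\times\Omega_{2r}$, replacing the transcendence of a single $\alpha$ by the algebraic independence of $\alpha_1,\dots,\alpha_r$ over $\qq$. First I would introduce the discrete measure
$$
Q_{Nhr}(A):=\frac{1}{N+1}\#\Big\{0\leq k\leq N:\big(\big(p^{-ikh}:p\in\pp_h\big),\big((m+\alpha_j)^{-ikh}:m\in\no,\ j=1,\dots,r\big)\big)\in A\Big\}
$$
on $(\uO_{hr},\mathcal{B}(\uO_{hr}))$, and prove the torus limit theorem: $Q_{Nhr}$ converges weakly to the Haar measure $m_{Hr}^h$. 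This is the analogue of Lemma~\ref{rk-le-1} (and of Lemma~\ref{rk-le-7}), and it is the only place where the hypotheses are genuinely used.

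Via the Fourier transform method, the torus limit theorem reduces to showing that
$$
\exp\bigg\{-ih\bigg(\sum_{p\in\pp_h}k_p\log p+\sum_{j=1}^{r}\sum_{m\in\no}l_{mj}\log(m+\alpha_j)\bigg)\bigg\}\neq 1
$$
whenever $(\underline{k},\underline{l}_1,\dots,\underline{l}_r)\neq(\underline 0,\dots,\underline 0)$. If this failed, exponentiating the resulting relation $\sum k_p\log p+\sum l_{mj}\log(m+\alpha_j)=-2\pi a/h$ would give
$$
\prod_{p\in\pp_h}p^{k_p}\prod_{j=1}^{r}\prod_{m\in\no}(m+\alpha_j)^{l_{mj}}=\exp\!\Big(-\tfrac{2\pi a}{h}\Big),
$$
whose right-hand side is rational because $\exp\{2\pi/h\}\in\qq$. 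The main new obstacle, compared with Theorem~\ref{rk-th-4}, is to extract from this single rationality condition that all $l_{mj}$ vanish; this is where algebraic independence enters. Viewing the Hurwitz factor as a Laurent monomial in the elements $m+\alpha_j$ of $\qq[\alpha_1,\dots,\alpha_r]$, which by algebraic independence may be treated as genuine indeterminates, each $m+\alpha_j$ is irreducible and the factors for distinct pairs $(j,m)$ are pairwise non-associate. Unique factorization then forces all $l_{mj}=0$, since a Laurent monomial in pairwise non-associate irreducibles can equal a nonzero rational constant only when every exponent vanishes. The relation collapses to $\prod_{p\in\pp_h}p^{k_p}=\exp(-2\pi a/h)$, which is impossible for a nontrivial $\underline{k}$ by the definition of $\pp_h$, exactly as in the proof of Lemma~\ref{rk-le-1}. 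With nondegeneracy established, $g_{Nhr}$ tends to the indicator of the trivial character, and the continuity theorem for probability measures on compact groups yields weak convergence of $Q_{Nhr}$ to $m_{Hr}^h$.

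The remaining steps parallel Section~\ref{sec-2} and require no new ideas. I would first transfer the torus convergence to the measures built from the absolutely convergent truncations $\varphi_{h,n}$ and $\zeta_n(\cdot,\alpha_j;\gb_j)$, obtaining a common limit measure $P_{nr}$ as in Lemma~\ref{rk-le-2}. Next, defining the shift $\Phi_{hr}(\omega)=f_{hr}\omega$ with $f_{hr}=\big((p^{-ih}:p\in\pp_h),((m+\alpha_j)^{-ih}:m\in\no,\ j=1,\dots,r)\big)$, the same character computation as above shows $\chi(f_{hr})\neq 1$ for every nontrivial character $\chi$ of $\uO_{hr}$, so $\Phi_{hr}$ is ergodic by the argument of Lemma~\ref{rk-le-5}. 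Ergodicity drives the mean-approximation estimates that play the role of Lemma~\ref{rk-le-3}, permitting passage from the truncations to $\uZ_{hr}$ and showing that $P_{Nhr}$ and the Haar-averaged measure $\hP_{Nhr}$ share a common weak limit $P_{hr}$, as in Lemma~\ref{rk-le-4}. Finally, the Birkhoff--Khintchine ergodic theorem together with the ergodicity of $\Phi_{hr}$ identifies $P_{hr}$ with the distribution $P_{\uZ hr}$ of $\uZ_{hr}(\us_r,\uA,\uom_{hr};\ugb)$, completing the proof. The only delicate point is the algebraic-independence step in the torus limit theorem; everything else is a routine transcription of the $r=1$ case.
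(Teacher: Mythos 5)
Your proposal is correct and follows essentially the same route as the paper: the only substantive step is the torus limit theorem for $Q_{Nhr}$, proved via the Fourier transform and the continuity theorem, with the non-degeneracy $\chi(f_{hr})\neq 1$ extracted from the rationality of $\exp\{-2\pi a/h\}$ and the algebraic independence of $\alpha_1,\dots,\alpha_r$, after which truncation, ergodicity of the shift, mean approximation, and the Birkhoff--Khintchine theorem are routine transcriptions of the $r=1$ case, exactly as the paper indicates. Your unique-factorization argument in $\qq[\alpha_1,\dots,\alpha_r]$ is in fact a welcome sharpening of the paper's terse ``arguing similarly as in the proof of Lemma~\ref{rk-le-1}'', since individual transcendence of the $\alpha_j$ alone would not rule out cross-relations such as $\alpha_2=\alpha_1+1$.
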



In the proof of this theorem, a crucial role is played by the following mixed discrete joint limit theorem on the torus $\uO_{hr}$.

\begin{lemma}\label{rk-le-10}
Suppose that $\vph(s)$, $\exp\big\{\frac{2 \pi}{h}\big\}$ and $\alpha_j$, $j=1,...,r$ are as in Theorem~\ref{rk-th-10}.
Then the probability measure
$$
Q_{Nhr}(A):=\frac{1}{N+1}\# \bigg\{0 \leq k \leq N: \bigg(\big(p^{-ikh}: p \in \pp_h\big), \big((m+\alpha_j)^{-ikh}: m \in \no, j=1,...,r\big)\bigg)\in A\bigg\},
$$
$A \in {\mathcal B}(\uO_{hr})$,  converges weakly to the Haar measure $m_{Hr}^h$ as $N \to \infty$.
\end{lemma}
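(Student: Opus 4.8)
The plan is to establish weak convergence of $Q_{Nhr}$ to the Haar measure $m_{Hr}^h$ by computing the Fourier transform of $Q_{Nhr}$ and showing that it converges pointwise to the Fourier transform of $m_{Hr}^h$, after which the continuity theorem for probability measures on compact Abelian groups (see \cite{HH-1977}) yields the conclusion. This follows the same architecture as the proof of Lemma~\ref{rk-le-1} and Lemma~\ref{rk-le-7}, but the novelty lies in combining the rationality of $\exp\{\frac{2\pi}{h}\}$ with the \emph{algebraic} independence of $\alpha_1,\dots,\alpha_r$, rather than the $\qq$-linear independence hypothesis of Lemma~\ref{rk-le-7}.

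First I would write the characters of $\uO_{hr}$ explicitly: a nontrivial character has the form
$$
\chi(\uom_{hr})=\prod_{p \in \pp_h}\omega_{1h}^{k_p}(p)\prod_{j=1}^{r}\prod_{m \in \no}\omega_{2j}^{l_{mj}}(m),
$$
with only finitely many of the integers $k_p$ and $l_{mj}$ nonzero. Substituting the definition of $Q_{Nhr}$, the Fourier transform becomes a geometric-type sum
$$
g_{Nhr}(\underline{k},\underline{l}_1,\dots,\underline{l}_r)=\frac{1}{N+1}\sum_{k=0}^{N}\exp\bigg\{-ikh\bigg(\sum_{p \in \pp_h}k_p\log p+\sum_{j=1}^{r}\sum_{m \in \no}l_{mj}\log(m+\alpha_j)\bigg)\bigg\}.
$$
For the trivial character this is identically $1$. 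The essential step is then to prove that for every nontrivial character the base of the exponential is not equal to $1$, i.e.,
$$
\exp\bigg\{-ih\bigg(\sum_{p \in \pp_h}k_p\log p+\sum_{j=1}^{r}\sum_{m \in \no}l_{mj}\log(m+\alpha_j)\bigg)\bigg\}\not=1.
$$
Once this is shown, the geometric sum has the closed form analogous to \eqref{rk-fo-8}, is $O(1/(N+1))$, and tends to $0$, so $g_{Nhr}$ converges to the Fourier transform of $m_{Hr}^h$.

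The hard part will be verifying the nonvanishing claim, and this is exactly where the two arithmetic hypotheses must cooperate. Arguing by contradiction as in Lemma~\ref{rk-le-1}, if the exponential equalled $1$, then for some integer $a$ we would obtain, after taking exponentials,
$$
\prod_{p \in \pp_h}p^{k_p}\prod_{j=1}^{r}\prod_{m \in \no}(m+\alpha_j)^{l_{mj}}=\exp\bigg(-\frac{2\pi a}{h}\bigg),
$$
whose right-hand side is rational by hypothesis. The key observation is that algebraic independence of $\alpha_1,\dots,\alpha_r$ forces each factor $(m+\alpha_j)$ to behave like an independent transcendental: if some $l_{mj}\not=0$, the left side would be a nontrivial algebraic relation among $\alpha_1,\dots,\alpha_r$ equated to a rational number, contradicting their algebraic independence. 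Hence all $l_{mj}=0$, and the relation collapses to
$$
\prod_{p \in \pp_h}p^{k_p}=\exp\bigg(-\frac{2\pi a}{h}\bigg),
$$
which is impossible by the very definition of $\pp_h$ (its primes are precisely those not dividing the numerator or denominator of $\exp\{\frac{2\pi}{h}\}=a/b$), exactly as in the final contradiction of Lemma~\ref{rk-le-1}. I expect the careful treatment of the case of mixed nonzero $l_{mj}$ across different indices $j$ — ensuring that the product over $j$ and $m$ cannot accidentally produce a rational value — to be the most delicate point, and this is where algebraic independence (rather than mere $\qq$-linear independence of the logarithms) is genuinely needed.
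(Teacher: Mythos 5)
Your proposal is correct and takes essentially the same route as the paper's own proof: the explicit characters of $\uO_{hr}$, the Fourier transform reduced to the geometric sum \eqref{rk-fo-20}, the nonvanishing claim \eqref{rk-fo-21} proved by contradiction using the rationality of $\exp\big\{\frac{2\pi}{h}\big\}$ together with the algebraic independence of $\alpha_1,\dots,\alpha_r$ to force all $l_{mj}=0$ and then the definition of $\pp_h$ to kill the remaining prime product, and finally the continuity theorem for probability measures on compact groups. If anything, your unique-factorization-style justification of why mixed nonzero exponents $l_{mj}$ across different $j$ cannot produce a rational value is spelled out in more detail than the paper, which simply notes that algebraic independence implies transcendence and refers back to the argument of Lemma~\ref{rk-le-1}.
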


\begin{proof}
The Fourier transform $g_{Nhr}$ of the measure $Q_{Nhr}$ is defined by the formula
$$
g_{Nhr}(\underline{k},\underline{l}_1,...,\underline{l}_r)=\int_{\uO_{hr}}\bigg(\prod_{p \in \pp_h}\omega_{1h}^{k_p}(p)\prod_{j=1}^{r}\prod_{m \in \no}\omega_{2j}^{l_{mj}}(m)\bigg)\d Q_{Nhr},
$$
where only a finite number of integers $k_p$ and $l_{jm}$ are distinct from zero.
Similar to \eqref{rk-fo-19-0}, we obtain
\begin{eqnarray}\label{rk-fo-20}
&& g_{Nhr}(\underline{k},\underline{l}_1,...,\underline{l}_r)\cr
&&\quad =
\frac{1}{N+1}\sum_{k=0}^{N}
\exp\bigg\{-ikh\bigg(\sum_{p \in \pp_h}k_p\log p+\sum_{j=1}^{r}\sum_{m \in \no}l_{mj} \log(m+\alpha_j)\bigg)\bigg\}.
\end{eqnarray}

From the condition on algebraic independence for $\alpha_1,...,\alpha_r$ it follows that they are trans\-cen\-den\-tal numbers (see \cite{YN-1996}).   Arguing similarly as in the proof of Lemma~\ref{rk-le-1}, we have
\begin{equation}\label{rk-fo-21}
	\exp\bigg\{-ih\bigg(\sum_{p \in \pp_h}k_p\log p+\sum_{j=1}^{r}\sum_{m \in \no}l_{mj}
	\log(m+\alpha_j)\bigg)\bigg\}\not =1
	\end{equation}
for $(\underline{k},\underline l_1,...,\underline l_r)$ when not all components are zero-vectors.
On the other hand, if $(\underline{k},\underline l_1,...,\underline l_r)=(\underline 0, \underline 0,...,\underline 0)$,
then the left-hand side of \eqref{rk-fo-21} is clearly equal to 1.
Thus we have
$$
\lim\limits_{N \to \infty} g_{Nhr}(\underline{k},\underline{l}_1,...,\underline{l}_r)=
	\begin{cases}
	1, &\text{if} \quad (\underline{k},\underline{l}_1,...,\underline{l}_r)=(\underline{0},\underline{0},...,\underline{0}),\cr
	0, &\text{otherwise}.
	\end{cases}
$$
This and the continuity theorem for probability measures on compact groups proves the lemma.
\end{proof}

The rest of the proof of Theorem~\ref{rk-th-3-1} goes in the same line as proof of Theorem~\ref{rk-th-2-1}, based on Theorem \ref{rk-th-10}, only with the minor changes replacing $\vf(s)$ by $\vph(s)$ together with $\pp$ by $\pp_h$, respectively.


\end{document}